\title[Blow-up rate of the mean curvature and a gap theorem]{Blow-up rate of the mean curvature during the mean curvature flow and a gap theorem for self-shrinkers}
\author{Nam Q.  Le}
\address{Department of
Mathematics, Columbia University, New York,
 USA}
\email{namle@math.columbia.edu}
\author{Natasa Sesum$^{*}$}
\address{
Department of Mathematics,
Rutgers University,
New Jersey,
USA}
\email{natasas@math.rutgers.edu}
\thanks{$*:$ Partially supported
by NSF grant DMS-0905749}
\newcommand{\review}[2][\right]{\relax
\ifx#1\right\relax \left.\fi#2#1\rvert}
\let\abs=\envert
 \newtheorem{definition}{Definition}[section]
\newtheorem{theorem}{Theorem}[section]
\newtheorem{remark}{Remark}[section]
\newtheorem{corollary}{Corollary}[section]
\newtheorem{lemma}{Lemma}[section]
\newtheorem{cor}{Corollary}[section]
\newtheorem{claim}{Claim}[section]
\newcommand{\bef}{\begin{flushright}}
\newcommand{\eef}{\end{flushright}}
\newcommand{\eval}[2][\right]{\relax
\ifx#1\right\relax \left.\fi#2#1\rvert}
\let\abs=\envert
\numberwithin{equation}{section}
\let\norm=\enVert
\newcommand{\nn}{{\bf{n}}}
\newcommand\e{\varepsilon}
\newcommand{\h}{\hspace*{.24in}}
\newcommand{\dist}{\mathrm{dist}}
\newcommand{\vol}{\mathrm{vol}}
\def\h{\hspace*{.24in}}
\def\beq{\begin{eqnarray*}}
\def\eeq{\end{eqnarray*}}
\def\NN{\mbox{$I\hspace{-.06in}N$}}
\def\RR{\mbox{$I\hspace{-.06in}R$}}
\def\SS{{\bold S}}
\newenvironment{myindentpar}[1]%
{\begin{list}{}%
         {\setlength{\leftmargin}{#1}}%
         \item[]%
}
{\end{list}}
\begin{document}

\maketitle
\author
\pagenumbering{arabic}
\begin{abstract} 
In this paper, we prove that the mean curvature blows up at the same rate as
the second fundamental form at the first singular time $T$ of any compact, Type I mean curvature flow. 
For the
mean curvature flow of surfaces, we obtain similar result provided that the Gaussian density is less than two. Our proofs are based on continuous 
rescaling and the 
classification of self-shrinkers. We show that all notions of singular sets defined in \cite{St} coincide for any Type I mean curvature flow, thus 
generalizing the result of Stone who established that for any mean convex Type I Mean curvature flow.
We also establish a gap theorem 
for self-shrinkers. 
\end{abstract}
\noindent

\section{Introduction and main results}
Let $M^{n}$ be a compact $n$-dimensional hypersurface without boundary, and let $F_{0}: M^{n}\rightarrow \RR^{n+ 1}$ be a smooth immersion of $M^{n}$ into $\mathbb{R}^{n+1}$. Consider a smooth one-parameter family of embeddings
\begin{equation*}
 F(\cdot, t): M^{n}\rightarrow \RR^{n +1}
\end{equation*}
satisfying
$
 F(\cdot, 0) = F_{0}(\cdot)$ 
and 
\begin{equation}
 \frac{\partial F(p, t)}{\partial t} = -H(p, t)\nu(p,t), \,\,\,~\forall (p, t)\in M\times [0, T).
\label{MCF1}
\end{equation}
Here $H(p, t)$ and $\nu(p, t)$ denote the mean curvature and the outward unit normal for the hypersurface $M_{t} = F(M^{n},t)$ at $F(p, t)$, respectively.
We will sometimes also write $x(p, t) = F(p, t)$, $M_{0}= M$ and refer to (\ref{MCF1}) as to the mean curvature flow equation. 
The mean curvature vector is denoted by $ \overrightarrow{H} = -H\nu$. Furthermore, for any 
compact $n$-dimensional hypersurface $M^{n}$ which is smoothly embedded in $\RR^{n+1}$ by $F: M^{n}\rightarrow \RR^{n+1}$, 
let us denote by $g = (g_{ij})$ the induced metric where 
$g_{ij} = \langle \frac{\partial}{\partial x_{i}} F, \frac{\partial}{\partial x_{j}} F\rangle$, $A = (h_{ij})$ the second fundamental 
form where $h_{ij} = \langle \frac{\partial}{\partial x_{i}} \nu, \frac{\partial}{\partial x_{j}} F\rangle$,  $d\mu =\sqrt{\text{det}~(g_{ij})}~dx$ the volume form,
$\nabla$ the induced Levi-Civita connection.
Then the mean curvature of $M^{n}$ is given by $$H = g^{ij}h_{ij} = \text{div}~ \nu.$$
With our convention on the choice of the unit normal vector $\nu$, $H$ is $n/R$ on the $n$-sphere $\SS^{n}(R)$ of radius $R$ in $\RR^{n+1}$ and $H$ is $k/R$ on the cylinder
$\SS^{k}(R)\times \RR^{n-k}\subset \RR^{n+1}$ of radius $R$ for the spherical factor.\\
\h In \cite{LS2}, the authors 
established the blow up of the mean curvature $H$ at the first singular time of the mean curvature flow in the case of  type I singularities. This result
somewhat extends that of Huisken \cite{Huisken84} on the blow-up of the second fundamental form at the first singular time
of the mean curvature flow. Before stating this result, we first recall the following definition. 
\begin{definition}
We say that the mean curvature flow (\ref{MCF1}) is of type I at the first singular time $T < \infty$, if the blow-up rate of the curvature satisfies an upper bound of the form
\begin{equation}
\max_{M_{t}}\abs{A}^2(\cdot, t) \leq \frac{C_{0}}{T-t}, ~ 0\leq t<T, \,\,\, 
\label{typeI}
\end{equation}
for all $t\in [0,T)$.
\end{definition}
In \cite{LS2} we proved the following result:
\begin{theorem}\cite[Theorem 1.2]{LS2}
Assume (\ref{typeI}) for the mean curvature flow (\ref{MCF1}). Then
\begin{equation}
 \lim_{t\rightarrow T}\mathrm{max}_{M_{t}} \abs{H}^2(\cdot, t) =\infty.
\label{Hbound}
\end{equation}
\label{MCbound}
\end{theorem}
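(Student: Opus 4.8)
The plan is to argue by contradiction, combining Huisken's Type I rescaling with his monotonicity formula and the structure of the resulting self-similar blow-up limits. Suppose the conclusion (\ref{Hbound}) fails; then there exist a constant $C$ and a sequence $t_j\uparrow T$ with $\max_{M_{t_j}}\abs{H}^2(\cdot,t_j)\le C$ for every $j$. Since $M_0$ is compact and $T<\infty$ is the first singular time, the curvature blows up, $\max_{M_t}\abs{A}^2(\cdot,t)\to\infty$ as $t\to T$, so the flow is genuinely singular at $T$; by White's local regularity theorem there is then a point $p_0\in\RR^{n+1}$ with Gaussian density $\Theta(p_0,T)>1$ (if every density were close to $1$, the flow would extend smoothly across $T$). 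Fix such a point $p_0$.

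Next I would carry out Huisken's Type I rescaling centered at $(p_0,T)$: set $\widetilde F(\cdot,s)=\bigl(2(T-t)\bigr)^{-1/2}\bigl(F(\cdot,t)-p_0\bigr)$ with rescaled time $s=-\tfrac12\log(T-t)\to\infty$, so that the rescaled hypersurfaces $\widetilde M_s=\widetilde F(M^n,s)$ evolve by $\partial_s\widetilde F=\widetilde F-\widetilde H\,\widetilde\nu$. The Type I hypothesis (\ref{typeI}) becomes the uniform bound $\abs{\widetilde A}^2(\cdot,s)=2(T-t)\,\abs{A}^2(\cdot,t)\le 2C_0$, and the Ecker--Huisken interior estimates then bound all covariant derivatives of $\widetilde A$ uniformly in $s$. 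Along the sequence $s_j=-\tfrac12\log(T-t_j)\to\infty$, Huisken's monotonicity formula --- whose Gaussian weighted area is nonincreasing in $s$ and converges to $\Theta(p_0,T)$, while its self-similarity defect tends to $0$ --- shows that a subsequence of the $\widetilde M_{s_j}$ converges in $C^\infty_{loc}$ to a smooth complete self-shrinker $\Sigma_\infty\subset\RR^{n+1}$, i.e.\ a hypersurface satisfying the self-shrinker equation $H_{\Sigma_\infty}=\langle x,\nu_\infty\rangle$. Since the Gaussian weighted area of $\Sigma_\infty$ equals $\Theta(p_0,T)>1$ whereas a hyperplane through the origin has Gaussian weighted area exactly $1$, the limit $\Sigma_\infty$ is \emph{not} a hyperplane through the origin.

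The boundedness of $H$ along the sequence enters precisely here. The rescaled mean curvature is $\widetilde H(\cdot,s)=\bigl(2(T-t)\bigr)^{1/2}H(\cdot,t)$, so $\max_{\widetilde M_{s_j}}\abs{\widetilde H}\le\bigl(2C(T-t_j)\bigr)^{1/2}\to0$ as $j\to\infty$. Passing to the $C^\infty_{loc}$ limit forces $H_{\Sigma_\infty}\equiv0$, and then the self-shrinker equation yields $\langle x,\nu_\infty\rangle\equiv0$ on $\Sigma_\infty$. Hence the position vector is everywhere tangent to $\Sigma_\infty$, so $\Sigma_\infty$ is a cone with vertex at the origin; a smooth complete hypersurface with this property must be a hyperplane through the origin. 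This contradicts the conclusion of the previous paragraph, and the contradiction proves (\ref{Hbound}).

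I expect the main obstacle to be the rescaling step: showing that the Type I rescaled flow subconverges, in $C^\infty_{loc}$, to a \emph{smooth} self-shrinker and that this limit is genuinely non-flat. The Type I bound supplies the zeroth-order curvature control, the Ecker--Huisken estimates upgrade it to all derivatives, and Huisken's monotonicity formula both forces the limit to be self-similar and, through its identification with $\Theta(p_0,T)>1$ (equivalently, via Huisken's lower bound $\max_{M_t}\abs{A}^2(\cdot,t)\ge\frac{1}{2(T-t)}$ applied at the points where the rescaled curvature concentrates), rules out the flat plane. Selecting a suitable center $p_0$ and excluding any loss of Gaussian weighted area in the limit is exactly the Type I blow-up analysis carried out by Huisken and by Stone \cite{St}, and this is the part one cannot shortcut.
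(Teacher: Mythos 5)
Your argument follows essentially the same route the paper uses for its stronger Theorem \ref{BLR}: Huisken's continuous Type I rescaling, uniform curvature and derivative bounds from (\ref{typeI}) together with Ecker--Huisken, Huisken's monotonicity formula to identify the limit as a self-shrinker, the observation that the rescaled mean curvature tends to zero so $H_{\infty}\equiv 0$ and the smooth limit must be a hyperplane, and White's local regularity theorem to finish. The only organizational difference is that you first locate a point $p_0$ with Gaussian density $\Theta(p_0,T)>1$ via White and derive the contradiction there, whereas the paper's Theorem \ref{BLR} argument works at an arbitrary reached point, shows its Gaussian density equals one, and concludes regularity everywhere; the two schemes are logically equivalent.

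One caution worth being explicit about (it is also implicit in the paper's own proof of Theorem \ref{BLR}, so it is not a gap relative to the paper's route): the contradiction ``$\Sigma_{\infty}$ is not a hyperplane because its Gaussian weighted area is $\Theta(p_0,T)>1$'' versus ``$\Sigma_{\infty}$ is a hyperplane because $H_{\infty}\equiv 0$'' tacitly assumes the blow-up limit is taken with multiplicity one; a multiplicity-$k$ hyperplane has Gaussian weight $k\geq 2$ and would be consistent with both statements. The paper also asserts without comment that the rescaled surfaces ``converge to a hyperplane'' and hence have limiting Gaussian integral $1$, whereas in the surface case (Theorem \ref{3dBLR}) it treats multiplicity carefully via Allard compactness, the constancy theorem, and the hypothesis that the density is below $2$. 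Also note that your appeal to Huisken and Stone at the end for the blow-up analysis is slightly misplaced: their arguments assume mean convexity, which is not available here; the paper's actual tools at this step are the Type I bound, Ecker--Huisken interior estimates, and Huisken's monotonicity formula, which you already cite.
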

On the other hand, Huisken \cite{Huisken90} also gave the (sharp) lower bound on the blow-up rate of the second fundamental form at the first singular 
time. This lower bound was based on the
maximum principle and states that
\begin{equation}
 \mathrm{max}_{M_{t}} \abs{A}^2(\cdot, t) \geq \frac{1}{2(T-t)}.
\label{BLlower}
\end{equation}
Having had Theorem \ref{MCbound}, one can naturally ask if a similar statement like (\ref{BLlower}) also holds for the mean curvature. It turns out that the answer is
yes. In this paper, we 
prove that the mean curvature blows up at the same rate as
the second fundamental form at the first singular time $T$ of the mean curvature flow if all singularities are of type I.  
This is the content of the following result:
\begin{theorem}
Assume (\ref{typeI}) for the mean curvature flow (\ref{MCF1}). Then, at the first singular time $T$ of the mean curvature flow, there exists 
$C_{\ast}>0$ such that
\begin{equation}
 \limsup_{t\rightarrow T} \sqrt{T-t} ~\mathrm{max}_{M_{t}} H(\cdot, t)\geq C_{\ast}.
\label{Hrate}
\end{equation}
\label{mainthm}
\end{theorem}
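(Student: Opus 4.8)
The plan is to argue by contradiction, combining Huisken's continuous rescaling with the classification of self-shrinkers. Suppose \eqref{Hrate} fails. Since every closed hypersurface $M_t$ satisfies $\max_{M_t}H>0$ --- at a point of $M_t$ farthest from the origin all principal curvatures with respect to the outward normal $\nu$ are positive, so $H>0$ there --- the failure of \eqref{Hrate} forces $\sqrt{T-t}\,\max_{M_t}H(\cdot,t)\to 0$ as $t\to T$, whereas by Theorem~\ref{MCbound} and Huisken's lower bound \eqref{BLlower} the second fundamental form $|A|$ does blow up, at least like $(T-t)^{-1/2}$.

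First I would locate a point where the singularity genuinely forms. Using \eqref{BLlower} together with a point-selection argument --- which is, in essence, the structure theorem for singular sets of type~I flows that I would establish first, generalizing Stone's result from the mean convex case --- I would choose times $t_j\to T$ and points $p_j\in M^n$ with $|A|^2(p_j,t_j)\ge c\,(T-t_j)^{-1}$ for a fixed $c>0$, such that $x_0:=\lim_j F(p_j,t_j)$ exists, the concentration takes place at the parabolic scale ($|F(p_j,t_j)-x_0|\le C\sqrt{T-t_j}$), and $x_0$ has Gaussian density strictly greater than $1$ --- hence is a singular point, by Huisken's monotonicity formula and White's local regularity theorem. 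I would then perform Huisken's continuous rescaling about $(x_0,T)$: with $\tilde F(\cdot,s)=\bigl(F(\cdot,t)-x_0\bigr)/\sqrt{2(T-t)}$ and $s=-\tfrac12\log(T-t)$, the rescaled hypersurfaces $\tilde M_s$ solve the rescaled mean curvature flow whose stationary solutions are exactly the self-shrinkers.

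Next I would pass to the limit. The type~I hypothesis \eqref{typeI} yields the uniform bound $|\tilde A|^2=2(T-t)|A|^2\le 2C_0$ along $\tilde M_s$, while Huisken's monotonicity formula makes the Gaussian area of $\tilde M_s$ almost constant for large $s$; by the standard compactness theory, a subsequence $\tilde M_{s_j}$ converges in $C^\infty_{\mathrm{loc}}$ to a smooth, complete, properly embedded self-shrinker $\Sigma_\infty$ with $|A_{\Sigma_\infty}|^2\le 2C_0$ and polynomial volume growth, and --- thanks to the point selection --- with $\sup_{\Sigma_\infty}|A_{\Sigma_\infty}|\ge\sqrt{2c}>0$, so $\Sigma_\infty$ is \emph{not} a hyperplane through the origin. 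On the other hand, passing to the limit in $\max_{\tilde M_s}\tilde H=\sqrt{2(T-t)}\,\max_{M_t}H\to 0$ gives $H_{\Sigma_\infty}\le 0$ everywhere on $\Sigma_\infty$. If $H_{\Sigma_\infty}$ changes sign, this already contradicts $H_{\Sigma_\infty}\le 0$. If $H_{\Sigma_\infty}$ does not change sign, then, since $\Sigma_\infty$ is non-flat, $H_{\Sigma_\infty}\not\equiv 0$, and by Huisken's and Colding--Minicozzi's classification of properly embedded self-shrinkers with bounded $|A|$ whose mean curvature does not change sign, $\Sigma_\infty$ is a round sphere or a generalized cylinder $\SS^{k}(\sqrt{2k})\times\RR^{n-k}$; but the orientation inherited from the enclosing flow $M_t$ (the outward normal) makes such a limit mean-convex, so $H_{\Sigma_\infty}>0$ somewhere --- again a contradiction. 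Hence \eqref{Hrate} holds, and one may take $C_*$ to be the infimum of $\sup H$ over the compact (by the self-shrinker compactness theorem) family of non-flat self-shrinkers with $|A|^2\le 2C_0$, which is positive.

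The hard part is the point selection in the second step. A priori the points realizing the curvature lower bound \eqref{BLlower} need not approach the center of the rescaling at the parabolic rate, so under continuous rescaling the curvature concentration could escape to spatial infinity and $\Sigma_\infty$ degenerate to a (possibly higher multiplicity) flat plane. Excluding this --- equivalently, showing that for a type~I flow the set of points of Gaussian density greater than $1$ coincides with the set of curvature-concentration points, so that one may rescale about a point where a genuine non-flat shrinker appears --- is precisely the singular-set structure theorem to be proved first. In the surface case $n=2$ this is also where the extra hypothesis that the Gaussian density is everywhere less than $2$ enters, to exclude multiplicity-two planes. The remaining ingredients --- smooth subconvergence of the rescaled flow, properness and polynomial volume growth of $\Sigma_\infty$, and the classification and compactness of self-shrinkers with bounded second fundamental form --- are by now standard under the type~I assumption.
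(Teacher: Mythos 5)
Your contradiction hypothesis, the continuous rescaling, and the final classification step (a self-shrinker with $H\le 0$ and bounded $|A|$ must be a hyperplane, via the Harnack alternative combined with Huisken's integral identity and the orientation convention) are all exactly the paper's ingredients. But the overall architecture you build around them is noticeably heavier than necessary, and you have flagged the wrong step as ``the hard part.''

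You set out to locate a genuinely singular point and arrange the rescaling so that the limit shrinker is \emph{non-flat}, which you correctly observe requires a point-selection lemma that controls the parabolic location of curvature concentration --- in effect, the singular-set structure theorem $\Sigma_H=\Sigma$ (the paper's Theorem~\ref{thm-all-sing-sets1}). That theorem, however, is proved in the paper only \emph{after} Theorems~\ref{mainthm} and~\ref{BLR}, and its proof imports the Chen--Yin pseudolocality theorem for MCF --- substantial extra machinery that Theorem~\ref{mainthm} does not need. The paper's proof sidesteps the whole issue: it shows that under the contradiction hypothesis \eqref{Hrate2}, the rescaled limit at \emph{every} point $y_0$ reached by the flow at time $T$ has $\tilde H_\infty\le 0$, hence (by the strong maximum principle / Harnack alternative and the Huisken-style integral identity for $|\tilde A_\infty|^2/\tilde H_\infty^2$) is a hyperplane, hence has Gaussian density one, hence $(y_0,T)$ is a regular point by White's theorem. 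Since this holds for \emph{all} reached points, $T$ cannot be the first singular time --- contradiction. No effort is ever spent ensuring the blow-up limit is non-flat; if it is flat, that is precisely the conclusion you want. In short, your route is workable, but it is a longer and more technology-laden path to the same endpoint, and the identification of the point selection as the essential difficulty is an artifact of that route rather than of the theorem itself.

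Two smaller remarks. Your opening observation that $\max_{M_t}H>0$ for any closed hypersurface is correct and pleasant, but unused in the paper's argument. And your closing attempt to extract a quantitative $C_*$ as an infimum of $\sup H$ over a ``compact family of non-flat self-shrinkers with $|A|^2\le 2C_0$'' is unnecessary --- the theorem only asserts that the limsup is positive, which the contradiction argument already delivers --- and the compactness assertion is not justified and should be dropped.
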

Theorem \ref{mainthm} extends Theorem \ref{MCbound} in two directions: 
\begin{myindentpar}{1cm} 
$\bullet$ It gives a lower bound, optimal modulo constants, on the blow up rate for the mean curvature.\\
  $\bullet$ The bound here
has a sign, not just absolute value, meaning that $H^+\equiv\max\{H, 0\}$ blows up at the rate $(T-t)^{-1/2}$.
\end{myindentpar}
The result in Theorem \ref{mainthm} should be compared with its Ricci flow analogue. For type-I Ricci flow, Enders, M\"{u}ller and Topping \cite{EMT} obtained
a lower bound on the blow up rate for the scalar curvature at the first singular time of the Ricci flow, similar to (\ref{Hrate}). 
Their result and ours have been proved by blow-up arguments. Note that, in the Ricci flow,
we have a uniform lower bound for the scalar curvature and moreover, the scalar curvature of a complete gradient shrinking Ricci soliton (the limit of 
blow-ups of Ricci flow solution) is nonnegative. These statements have 
no analogues in the mean curvature flow. Therefore, the result obtained in (\ref{Hrate}) is interesting. However, it is not completely surprising if one 
observes the following somewhat analogous statements between the two flows:
\begin{myindentpar}{1cm}
 $\bullet$ There are no gradient shrinking Ricci solitons with scalar curvature negative somewhere.\\
$\bullet$ There are no self-shrinkers  with mean curvature negative everywhere.
\end{myindentpar}
The first statement follows by \cite{ChenRicci}. The latter statement follows from  \cite[Theorem 5.1]{Huisken93} for 
self-shrinkers with bounded second fundamental form, and 
Colding and Minicozzi \cite[Theorem 0.17]{CM} where no assumptions on the second fundamental form of the self-shrinkers were made. We will use 
the above observation as a replacement for the nonnegativity of the mean curvature of a self-shrinker (which is not always true) in our proof of 
Theorem \ref{mainthm}. \\
\h An easier version of Theorem \ref{mainthm}, for the purpose of illustration, is the following:
\begin{theorem}
Assume (\ref{typeI}) for the mean curvature flow (\ref{MCF1}). Then, at the first singular time $T$ of the mean curvature flow, there exists $C_{\infty}>0$ such that
\begin{equation}
 \limsup_{t\rightarrow T} (T-t)~ \mathrm{max}_{M_{t}} \abs{H}^2(\cdot, t)\geq C_{\infty}.
\label{MCrate}
\end{equation}
More generally, for any $\alpha\geq n$, there exists $C_{\alpha}>0$ such that
\begin{equation}
 \limsup_{t\rightarrow T} (T-t)^{\frac{\alpha-n}{2\alpha}} \norm{H}_{L^{\alpha}(M_{t})}\geq C_{\alpha}.
\label{MCratealpha}
\end{equation}
In the special case of $\alpha =n$, we obtain the following non-collapsing type result:
there exists $C>0$ such that
\begin{equation}
 \limsup_{t\rightarrow T} \norm{H}_{L^{n}(M_{t})}\geq C.
\end{equation}
 \label{BLR}
\end{theorem}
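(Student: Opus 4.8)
The plan is to argue by contradiction, combining Huisken's continuous rescaling at the first singular time with the classification of self-shrinkers having vanishing mean curvature, much as in the proof of Theorem \ref{MCbound}. Assume the conclusion of Theorem \ref{BLR} fails. Since all the quantities involved are nonnegative, this means either that (\ref{MCrate}) fails, i.e.\ $\lim_{t\to T}(T-t)\max_{M_{t}}\abs{H}^{2}(\cdot,t)=0$, or that (\ref{MCratealpha}) fails for some $\alpha\ge n$, i.e.\ $\lim_{t\to T}(T-t)^{\frac{\alpha-n}{2\alpha}}\norm{H}_{L^{\alpha}(M_{t})}=0$; we run the same blow-up argument in either case. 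Fix a point $x_{0}\in\RR^{n+1}$ at which the flow becomes singular at time $T$, and introduce the Huisken rescaling $\tilde F^{x_{0}}(p,s)=\frac{1}{\sqrt{2(T-t)}}\bigl(F(p,t)-x_{0}\bigr)$, $s=-\tfrac12\log(T-t)$, with image hypersurfaces $\tilde M_{s}$. By the type I hypothesis (\ref{typeI}) one has a uniform bound $\abs{\tilde A}^{2}(\cdot,s)\le C_{0}$, and by Huisken's monotonicity formula there is a sequence $s_{j}\to\infty$ along which $\tilde M_{s_{j}}$ converges smoothly on compact subsets of $\RR^{n+1}$ to a smooth, complete, properly embedded self-shrinker $\Sigma_{\infty}$, i.e.\ one satisfying $\overrightarrow{H}=-x^{\perp}$, equivalently $H=\langle x,\nu\rangle$. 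Choosing $x_{0}$ at a genuine concentration point of the curvature --- so that the points $p_{j}$ with $(T-t_{j})\abs{A}^{2}(p_{j},t_{j})\ge c>0$ produced by (\ref{BLlower}) remain in a bounded region after rescaling --- the limit $\Sigma_{\infty}$ inherits a point where $\abs{A_{\Sigma_{\infty}}}^{2}\ge c>0$; in particular $\Sigma_{\infty}$ is \emph{not} a hyperplane.

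The next step transfers the assumed decay to $\Sigma_{\infty}$. Under the parabolic rescaling one has $\tilde g=(2(T-t))^{-1}g$, so that $d\tilde\mu=(2(T-t))^{-n/2}\,d\mu$ and $\tilde H(\cdot,s)=\sqrt{2(T-t)}\,H(\cdot,t)$; consequently
\[
\max_{\tilde M_{s}}\tilde H^{2}=2(T-t)\,\max_{M_{t}}\abs{H}^{2}(\cdot,t),\qquad \norm{\tilde H}_{L^{\alpha}(\tilde M_{s})}=\bigl(2(T-t)\bigr)^{\frac{\alpha-n}{2\alpha}}\norm{H}_{L^{\alpha}(M_{t})}.
\]
By the contradiction hypothesis, the right-hand sides tend to $0$ as $s\to\infty$. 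Passing to the limit along $s_{j}$ and using the smooth local convergence $\tilde M_{s_{j}}\to\Sigma_{\infty}$, we get $H\equiv0$ on $\Sigma_{\infty}$: this is immediate in the first case, and in the second case it follows because $\int_{K}\abs{H}^{\alpha}\,d\mu_{\Sigma_{\infty}}=\lim_{j}\int_{K_{j}}\abs{\tilde H}^{\alpha}\,d\tilde\mu\le\limsup_{j}\norm{\tilde H}_{L^{\alpha}(\tilde M_{s_{j}})}^{\alpha}=0$ for every compact $K\subset\Sigma_{\infty}$, with $K_{j}\subset\tilde M_{s_{j}}$ the piece converging to $K$.

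To finish, a smooth complete self-shrinker with $H\equiv0$ must be a hyperplane through the origin: the shrinker equation forces $\langle x,\nu\rangle\equiv0$, so the position vector is everywhere tangent; hence $\Sigma_{\infty}$ is invariant under dilations about $0$, i.e.\ a cone, completeness puts its vertex into $\Sigma_{\infty}$, and smoothness at the vertex makes $\Sigma_{\infty}$ a hyperplane through $0$. This contradicts the non-flatness of $\Sigma_{\infty}$ established above, and proves (\ref{MCrate}) and (\ref{MCratealpha}). The remaining assertion of the theorem is exactly the case $\alpha=n$ of (\ref{MCratealpha}), where the exponent $\frac{\alpha-n}{2\alpha}$ vanishes, so that the scale-invariant quantity $\norm{H}_{L^{n}(M_{t})}$ itself stays bounded away from $0$.

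The main obstacle --- the only step that goes beyond scaling bookkeeping and the elementary classification of minimal self-shrinkers --- is the construction in the first paragraph: one must choose the blow-up center $x_{0}$ so that the rescaled flow genuinely detects the singularity, i.e.\ so that the curvature-concentration points remain within a bounded region under rescaling and the limiting self-shrinker is smooth, complete and non-flat. This is exactly where Huisken's monotonicity formula and the type I bound (\ref{typeI}) do the real work, and it is the same mechanism responsible for the coincidence of the various notions of singular set in the type I setting.
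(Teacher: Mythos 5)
Your blow-up framework (continuous Huisken rescaling, type I curvature bounds, convergence to a self-shrinker, transfer of the assumed decay to get $H_{\infty}\equiv 0$, and the classification of smooth complete shrinkers with vanishing mean curvature as hyperplanes) matches the paper's, and the scaling identities for $\max H^2$ and the $L^{\alpha}$ norms are correct. But the step you lean on to close the argument --- choosing the center $x_{0}$ so that the curvature-concentration points supplied by (\ref{BLlower}) ``remain in a bounded region after rescaling'', so that $\Sigma_{\infty}$ is non-flat --- is asserted, not proved, and it is a genuine gap. Huisken's lower bound only gives, at each time $t_j$, some point $q_j\in M$ with $(T-t_j)\abs{A}^2(F(q_j,t_j),t_j)\geq \tfrac12$; after extracting a limit $y_0$ of $F(q_j,t_j)$ you have no estimate of the form $\abs{F(q_j,t_j)-y_0}\leq C\sqrt{T-t_j}$, so in the rescaled picture these points may escape to spatial infinity and the locally smooth limit at $y_0$ can perfectly well be a flat plane. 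What you need is the existence of a \emph{special} singular point (a point whose own trajectory sees type I curvature blow-up), i.e.\ a non-trivial tangent flow at a well-chosen center; in the mean convex case this is Stone's theorem, and for general type I flows it is essentially the content of Theorem \ref{thm-all-sing-sets1} of this paper, whose proof requires the pseudolocality theorem of Chen--Yin. So the ``main obstacle'' you flag is not a technical selection lemma one can wave through --- it is the hard part, and your proposal does not supply it.

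The paper's proof avoids this issue entirely and you could repair your argument the same way: do not try to make the limit non-flat. Instead, take an \emph{arbitrary} point $y_0$ reached by the flow at time $T$, run your rescaling there, and conclude from $H_{\infty}\equiv 0$ that the limit is a hyperplane; then use the identity $\int_{M_t}\rho_{y_0,T}\,d\mu_t=\int_{\tilde M_s}\tilde\rho\,d\tilde\mu_s$ together with the convergence to the plane to see that the Gaussian density at $(y_0,T)$ equals $1$, and invoke White's local regularity theorem to conclude $(y_0,T)$ is a regular point. Since this holds at every reached point, $\abs{A}$ stays bounded up to time $T$, contradicting that $T$ is the first singular time. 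In other words, the flatness of the limit is not an obstruction to the contradiction --- combined with unit density and White's theorem it \emph{is} the contradiction; without White's theorem (or an equivalent local regularity/pseudolocality input) your route needs the unproved concentration-point selection.
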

\h The proof of Theorem \ref{MCbound} was based on blow-up arguments using Huisken's monotonicity formula, the classification of self-shrinkers
and White's local regularity theorem for mean curvature flow. See also the recent paper \cite{Cooper} for a different approach, which does 
not give the blow-up rate
as in Theorems \ref{mainthm} and \ref{BLR}. The idea in the proof of Theorems \ref{mainthm} and \ref{BLR} and other results in the present paper is the
use of continuous rescaling. \\
\h In the case of the mean curvature flow of surfaces in $\RR^{3}$, without any assumptions on possible singularities, we proved in \cite{LS2} 
that if the Gaussian densities of the flow is  below two, then the mean
curvature must blow up at the first singular time.
In this paper, we 
sharpen the above result by establishing the blow-up rate, optimal modulo constants, of the mean curvature for mean curvature flow of surfaces with Gaussian densities below two. 
Equivalently, we will prove the following:
\begin{theorem}
 Let $M^{2}$ be a compact, smooth and embedded 2-dimensional manifold in $\RR^{3}$. \\
(a) Suppose that
\begin{equation}
  \lim_{t\rightarrow T}\mathrm{max}_{M_{t}} \abs{H}^2(\cdot, t) (T-t) =0.
\label{BL3d}
\end{equation}
Let $y_{0}\in \RR^{3}$ be a point 
reached by the mean curvature 
flow (\ref{MCF1}) at time $T$. If the Gaussian density at $(y_{0}, T)$ satisfies
\begin{equation}
 \lim_{t\nearrow T} \int \rho_{y_{0}, T} d\mu_{t}:= \lim_{t\nearrow T} \int \frac{1}{[4\pi(T-t)]^{n/2}} \mathrm{exp}(-\frac{\abs{y-y_{0}}^2}{4(T-t)})
d\mu_{t} < 2,
\label{below2}
\end{equation}
 then $(y_{0}, T)$ is a regular point of the mean curvature flow (\ref{MCF1}).\\
(b) The result in (a) is still valid if we replace (\ref{BL3d}) by the following weaker condition:
\begin{equation}
 \label{BL3d-sign}
\limsup_{t\rightarrow T} \sqrt{T-t}~ \mathrm{max}_{M_{t}} H(\cdot, t)\leq 0.
\end{equation}
\label{3dBLR}
\end{theorem}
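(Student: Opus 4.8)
\medskip
\noindent The plan is to argue by contradiction, combining the continuous rescaling of the flow at $(y_{0},T)$ with the classification of self-shrinkers, in the spirit of the proofs of Theorems~\ref{mainthm} and \ref{BLR}. I treat part (b) first, since part (a) follows at once: (\ref{BL3d}) gives $\limsup_{t\nearrow T}\sqrt{T-t}\,\max_{M_{t}}H\le\limsup_{t\nearrow T}\sqrt{(T-t)\max_{M_{t}}\abs{H}^{2}}=0$, which is exactly (\ref{BL3d-sign}). So assume (\ref{below2}) and (\ref{BL3d-sign}), and suppose for contradiction that $(y_{0},T)$ is a \emph{singular} point. Rescale continuously about $(y_{0},T)$: set $\tilde x=(x-y_{0})/\sqrt{2(T-t)}$ and $s=-\tfrac12\log(2(T-t))$, so that the hypersurfaces $\tilde M_{s}$ solve the normalized flow. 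Huisken's monotonicity formula becomes the monotonicity in $s$ of the Gaussian area $F(\tilde M_{s})=\int\rho_{0,0}\,d\tilde\mu_{s}$, whose limit as $s\to\infty$ equals $\lim_{t\nearrow T}\int\rho_{y_{0},T}\,d\mu_{t}<2$ by (\ref{below2}). Hence, along a sequence $s_{i}\to\infty$, the $\tilde M_{s_{i}}$ converge as integral Brakke flows to a self-shrinker $\Sigma^{2}\subset\RR^{3}$ passing through the origin with $F(\Sigma)=\Theta(y_{0},T)<2$.

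\noindent The next step is to upgrade this convergence. Since we are in dimension two, the regularity theory for mean curvature flow of surfaces in $\RR^{3}$ (going back to Ilmanen) guarantees that this tangent flow is a \emph{smooth} self-shrinker, with some constant integer multiplicity $m$; because $F(\Sigma)=m\,F(\Sigma_{\mathrm{red}})<2$ while $F\ge 1$ for every self-shrinker, with equality only for hyperplanes, we must have $m=1$. Thus $\Sigma$ is a smooth, properly embedded self-shrinker in $\RR^{3}$ with polynomial volume growth, and $\tilde M_{s_{i}}\to\Sigma$ smoothly on compact sets.

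\noindent Now I bring in the hypothesis on $H$. Under the rescaling the mean curvature transforms by $\tilde H(\cdot,s)=\sqrt{2(T-t)}\,H(\cdot,t)$, so $\max_{\tilde M_{s}}\tilde H=\sqrt2\,\sqrt{T-t}\,\max_{M_{t}}H$, and (\ref{BL3d-sign}) gives $\limsup_{s\to\infty}\max_{\tilde M_{s}}\tilde H\le 0$. Passing to the limit along the smooth convergence, $H_{\Sigma}\le 0$ everywhere on $\Sigma$. A self-shrinker satisfies $\mathcal{L}H=H$ with $\mathcal{L}f=\Delta f-\tfrac12\langle x,\nabla f\rangle+\abs{A}^{2}f$, so by the strong maximum principle $H_{\Sigma}$ is either identically zero or of a strict sign; as there are no self-shrinkers with mean curvature negative everywhere (\cite[Theorem 5.1]{Huisken93} for bounded $\abs{A}$, \cite[Theorem 0.17]{CM} in general, as recalled in the Introduction), we conclude $H_{\Sigma}\equiv 0$. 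A complete embedded self-shrinker with $H\equiv 0$ has $\langle x,\nu\rangle\equiv 0$, hence is a cone, hence a hyperplane. Therefore $\Theta(y_{0},T)=F(\Sigma)=1$, and White's local regularity theorem for mean curvature flow forces $(y_{0},T)$ to be a regular point --- contradicting our assumption. This proves (b), and hence also (a).

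\noindent The main obstacle is the step from the a priori only weak (varifold/Brakke) subconvergence of the rescaled flows to a smooth, multiplicity-one self-shrinker using nothing more than the bound $\Theta(y_{0},T)<2$; no Type I assumption is available in this setting, so one must exploit the regularity theory special to surfaces in $\RR^{3}$, together with the sharp entropy bounds for self-shrinkers (every non-flat self-shrinker in $\RR^{3}$ has Gaussian area $\ge 4/e>1$), to rule out higher multiplicity and to know the limit is smooth. A secondary point requiring care is that the sign condition on $\tilde H$ must survive the limit --- which the smooth convergence provides --- and that the limiting self-shrinker inherits an orientation from the flow, for which ``mean curvature negative everywhere'' is indeed the obstruction one may invoke.
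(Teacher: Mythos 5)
Your proposal is correct and follows essentially the same approach as the paper's proof of part (b): continuous rescaling about $(y_{0},T)$, Ilmanen's two-dimensional blow-up regularity to obtain a smooth tangent self-shrinker, the density bound $<2$ together with $F\ge 1$ for self-shrinkers to force multiplicity one and hence smooth convergence, passing the sign condition $\tilde H\le 0$ to the limit, the Colding--Minicozzi classification \cite[Theorem 0.17]{CM} to identify the limit as a hyperplane, and White's local regularity theorem to conclude. Your reduction of (a) to (b) via $(T-t)\max\abs{H}^2\to 0\Rightarrow\limsup\sqrt{T-t}\max H\le 0$ is valid; the paper instead proves (a) separately and more directly, using lower semicontinuity of $\int\abs{\overrightarrow{H}}\,d\mu$ under varifold convergence to get $\overrightarrow{H}_\infty=0$ immediately (so that no smooth convergence of $\tilde M_s$ is needed to pass the sign to the limit), followed by Simon's constancy theorem; your route sacrifices this economy in (a) but is logically sound. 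Two small remarks: the strong-maximum-principle step you insert before invoking the classification is unnecessary, since $H_{\Sigma}\le 0$ together with \cite[Theorem 0.17]{CM} already yields the hyperplane directly; and the assertion that the tangent flow comes with ``some constant integer multiplicity $m$'' is not automatic from Ilmanen's smoothness alone --- the paper supplies it via Sch\"atzle's constancy theorem, using connectedness of the supports $\tilde M_s$ and local boundedness of the generalized mean curvature of $X_\infty$.
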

In particular, our theorem says that for the mean curvature flows of surfaces with Gaussian densities below two, 
at the first singular time $T$, the mean curvature must blow up to infinity at the 
rate $(T-t)^{-\frac{1}{2}}$. \\
\h In \cite{EMT}, Ender, M\"{u}ller 
and Topping established the blow up rate of the scalar curvature at any singular point of type-I Ricci flow. 
In \cite{St}, Stone established the blow up rate of the second fundamental  form and the mean curvature at any singular point of the mean 
convex mean curvature flow having type-I singularities. 
In this paper, we remove the mean convexity condition in \cite{St} by establishing sharp blow-up rates of the mean curvature at any 
singular point of the Type I
mean curvature flow.
Before stating our result in that direction, we give the definitions of different types of singular points, as in \cite{St}.
\begin{definition}
\label{def-sing}
\begin{enumerate}
\item[(i)]
We say $p\in M_0$ is a special singular point of the flow (\ref{MCF1}), as $t\to T$, if there exists a fixed $\delta > 0$, such that, for some sequence of times $t_i\to T$,
$$|A|^2(F(p,t_i)) \ge \frac{\delta}{T - t_i}.$$
If, on the other hand, $|A|^2(p,t_i) \le \frac{C}{T-t_i}$, we say that $p$ is a type I special singular point, otherwise we say it is a type II special singular point.
\item[(ii)]
We say $p\in M_0$ is a general singular point of the flow (\ref{MCF1}), as $t\to T$, if there exists a fixed $\delta > 0$, such that, for some sequence of times $t_i\to T$, and some sequence of points $p_i\in M_0$, with $p_i\to p$,
$$|A|^2(F(p_i,t_i)) \ge \frac{\delta}{T-t_i}.$$
We distinguish between type I and type II general singular points as in the case of special singular points.
\end{enumerate}
\end{definition} 

Denote by $\Sigma_s$ the set of all special singular points of the flow and by $\Sigma_g$ the set of all general singular points of 
the flow.  Moreover, we denote by $\Sigma_A\subset \Sigma_s$ the set of all points $p\in M_0$ such that $|A|(F(p,t),t)$ blows up at the 
Type I rate as $t\to T$, that is $|A|^{2}(F(p,t),t) \ge \frac{\delta}{T-t}$ for all $t\to T$. Similarly, let $\Sigma_H$ be the set of all 
points $p\in M_0$ such that $|H|(F(p,t),t)$ blows up at the type I rate as $t\to T$. Let $\Sigma$ be the set of all points $p\in M_0$ that do not have a neighborhood $p\in U_p$ in which $|A(\cdot,t)|$ stays uniformly bounded as $t\to T$.

It is obvious that $\Sigma_H \subset \Sigma_A \subset \Sigma_s \subset \Sigma_g \subset \Sigma$. In \cite{St} it was proved 
that $\Sigma_s = \Sigma_g = \Sigma$, in the case of a mean convex flow ($H \ge 0$) and type I singular points. An analogous 
statement for the type I Ricci flow has been obtained in \cite{EMT}. Our goal in this paper is to show 
that $\Sigma_H = \Sigma$, that is all notions of singular sets coincide for any type I mean curvature flow, without requiring the mean convexity. 
Our result states as follows.

\begin{theorem}
\label{thm-all-sing-sets1}
Let $(M_t)$ be a closed, type I mean curvature flow in $\mathbb{R}^{n+1}$. Then $\Sigma_H = \Sigma$.
\end{theorem}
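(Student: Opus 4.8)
Since $\Sigma_H\subseteq\Sigma$ is part of the chain $\Sigma_H\subseteq\Sigma_A\subseteq\Sigma_s\subseteq\Sigma_g\subseteq\Sigma$ recorded above, it suffices to prove $\Sigma\subseteq\Sigma_H$; this collapses all five sets at once. The first ingredient I would establish is that a type I flow satisfies $\Sigma=\Sigma_A$, i.e.\ that where $\abs{A}$ is unbounded it in fact grows at the type I rate $(T-t)^{-1/2}$; this is the analogue of Stone's identity $\Sigma_s=\Sigma_g=\Sigma$ \emph{without} the mean convexity assumption of \cite{St}. It follows from the type I upper bound together with White's local regularity theorem --- at a space--time point where the flow is, at small scales, too flat for $\abs{A}$ to reach the type I rate, White's theorem gives regularity, hence a bound on $\abs{A}$ nearby --- and from the fact that the tangent flow of a type I flow at a singular point is a \emph{multiplicity-one} smooth complete embedded self-shrinker. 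None of this uses mean convexity.

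The second, and decisive, ingredient is: \emph{at every singular space--time point $(y_0,T)$ of a type I flow, $\abs{H}$ blows up at the type I rate near $y_0$.} To see this I would carry out the continuous rescaling around $(y_0,T)$ as in the proof of Theorem \ref{mainthm}: set $s=-\tfrac12\log(T-t)$ and $\tilde F_s=(F-y_0)/\sqrt{2(T-t)}$, with rescaled hypersurfaces $\tilde M_s$; the type I bound makes $\abs{\tilde A}$, and by interior estimates all of its covariant derivatives, uniformly bounded on $\tilde M_s$, so by Huisken's monotonicity formula and the compactness of self-shrinkers with bounded geometry, along a subsequence $\tilde M_{s_j}\to\Sigma_\infty$ in $C^\infty_{\mathrm{loc}}$, a smooth complete embedded self-shrinker. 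Because $(y_0,T)$ is genuinely singular and the tangent flow has multiplicity one, $\Sigma_\infty$ is not a hyperplane, so $H_{\Sigma_\infty}\not\equiv 0$. Now $H_{\Sigma_\infty}$ satisfies on $\Sigma_\infty$ a drift--diffusion equation $\Delta_{\Sigma_\infty}H-\tfrac12\langle x,\nabla H\rangle+\big(\abs{A_{\Sigma_\infty}}^2-\tfrac12\big)H=0$ whose zeroth order coefficient is \emph{bounded}, precisely because a type I tangent flow has bounded curvature. If $H_{\Sigma_\infty}\le 0$ everywhere it would then have an interior zero, and the strong maximum principle would force $H_{\Sigma_\infty}\equiv 0$ on a component of $\Sigma_\infty$, i.e.\ that component would be a hyperplane, a contradiction; and $H_{\Sigma_\infty}<0$ everywhere is ruled out by \cite[Theorem 0.17]{CM}. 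Hence $H_{\Sigma_\infty}>0$ somewhere, say $H_{\Sigma_\infty}\ge 2\delta>0$ on an open piece; rescaling back, at every $t$ close to $T$ there is a point of $M_t$ within distance $O(\sqrt{T-t})$ of $y_0$ at which $H\ge\delta/\sqrt{T-t}$.

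Combining the two ingredients proves $\Sigma\subseteq\Sigma_H$: for $p\in\Sigma=\Sigma_A$ and any neighborhood $U_p$ of $p$ in $M_0$, the unboundedness of $\abs{A}$ on $U_p$ as $t\to T$ produces a reachable singular point $(y_0,T)$ with $y_0=\lim F(q_j,t_j)$, $q_j\in U_p$, near which, by the second ingredient, $\abs{H}$ blows up at the type I rate; since for $t$ close to $T$ the relevant part of $M_t$ near $y_0$ lies in $F(U_p,t)$ and $U_p$ is arbitrary, $p\in\Sigma_H$. The White-regularity input behind $\Sigma=\Sigma_A$ and the multiplicity-one property of type I tangent flows are essentially standard singularity analysis, and this last localization step is routine bookkeeping. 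The crux --- and the only place where dropping the mean convexity of \cite{St} genuinely costs anything --- is the assertion $\sup_{\Sigma_\infty}H>0$ on the non-flat tangent flow: when $H\ge 0$ this is immediate from Huisken's classification of mean convex shrinkers, whose models all have $H$ bounded below by a positive constant, whereas in general one must instead run the strong maximum principle off the purely qualitative fact that no self-shrinker has everywhere negative mean curvature, and making that substitution work is what the argument ultimately rests on.
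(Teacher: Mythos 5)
Your proof has a genuine gap in the final ``combining'' step, and the gap is exactly the one that the paper's pseudolocality argument is designed to close.

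Your second ingredient, if established, only produces the following: at every time $t$ close to $T$ there is \emph{some} point of $M_t$ in a shrinking parabolic neighborhood of $y_0$ where $H\ge\delta/\sqrt{T-t}$. But $\Sigma_H$ is defined by blow-up of $H$ along the fixed trajectory $t\mapsto F(p,t)$, not by blow-up somewhere nearby. The distinction is precisely the one between ``special'' and ``general'' singular points in Definition \ref{def-sing}, and it is a real one: centering the blow-up along the trajectory of $p$ (so that the trajectory corresponds to the origin on the tangent flow), one may perfectly well have $H_\infty(0,-1)=0$ while $H_\infty>0$ elsewhere on $\Sigma_\infty$. Without mean convexity, no classification forces $H_\infty$ to be a positive constant, and $H_\infty$ can change sign; so knowing $\sup_{\Sigma_\infty}H>0$ gives no pointwise conclusion at the origin, and ``since for $t$ close to $T$ the relevant part of $M_t$ near $y_0$ lies in $F(U_p,t)$ \dots{} $p\in\Sigma_H$'' does not follow. (Indeed, even in the mean-convex case, Stone's argument uses that the cylindrical tangent flow has $H_\infty(0)>0$ because $H$ is a positive \emph{constant} on the cylinder passing through the origin; your $\sup H>0$ version is strictly weaker than that and would not recover Stone's conclusion either.)

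The paper's proof runs in the opposite direction and fills exactly this gap with the Chen--Yin pseudolocality theorem. Assuming $p\in\Sigma\backslash\Sigma_H$, one gets a sequence $t_i$ with $|H|(F(p,t_i),t_i)\le\epsilon_i/\sqrt{T-t_i}$, $\epsilon_i\to 0$; the rescaled limit then satisfies $H_\infty(0,-1)=0$ \emph{and} $\nabla H_\infty(0,-1)=0$ (Lemma \ref{lem-nabla-H}, using $F_\infty(0,s)=0$). That second-order vanishing is what makes the tangent flow a small-slope Lipschitz graph over a fixed scale $\tilde r_0$ around the origin (Claim \ref{main-claim} and the discussion after it). The pseudolocality theorem \cite[Theorem 1.4]{BLC} then transfers this one-scale flatness of the blow-up back to the original flow, producing a uniform bound on $|A|^2$ on a fixed parabolic neighborhood of $(y_0,T)$, which contradicts $p\in\Sigma$. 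It is the pseudolocality step---propagating approximate flatness at one scale forward in time to control curvature on an open set---that replaces your heuristic ``combining'' and is the genuine new ingredient needed once Huisken's mean-convex classification is dropped.
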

A consequence of Theorem \ref{thm-all-sing-sets1} is the following Corollary whose analogue has been proved for the type I Ricci flow in \cite{EMT}.
\begin{cor}
Consider the type I mean curvature flow (\ref{MCF1}). If $\mu_{0}(M_0) < \infty$, then $\lim_{t\to T}\mu_{t}(\Sigma) = 0$. Here $d\mu_{t}$ is the volume form
of $M_{t}$.
\label{zero-vol}
\end{cor}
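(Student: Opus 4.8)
The plan is to derive the corollary as a soft consequence of Theorem~\ref{thm-all-sing-sets1} together with the evolution of the area element along (\ref{MCF1}). Recall that under (\ref{MCF1}) one has $\partial_t\,d\mu_t=-H^2\,d\mu_t$; writing $J(p,t)$ for the Jacobian $d\mu_t/d\mu_0$ of $F(\cdot,t)$ at $p$, this integrates to the closed formula
\begin{equation*}
J(p,t)=\exp\Bigl(-\int_0^t H^2(F(p,s),s)\,ds\Bigr),
\end{equation*}
which is non-increasing in $t$ and satisfies $\mu_t(S)=\int_S J(p,t)\,d\mu_0(p)$ for every Borel set $S\subseteq M_0$. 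In particular, since $\mu_0(M_0)<\infty$, the constant function $1$ dominates the whole family $\{J(\cdot,t)\}_{t<T}$ in $L^1(M_0,d\mu_0)$.

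Next I would invoke Theorem~\ref{thm-all-sing-sets1} to replace $\Sigma$ by $\Sigma_H$. Thus for every $p\in\Sigma$ there exist $\delta_p>0$ and $\tau_p<T$ with $H^2(F(p,t),t)\ge \delta_p/(T-t)$ for all $t\in[\tau_p,T)$, and hence
\begin{equation*}
\int_0^T H^2(F(p,s),s)\,ds\ \ge\ \int_{\tau_p}^{T}\frac{\delta_p}{T-s}\,ds\ =\ +\infty ,
\end{equation*}
so that $J(p,t)\to 0$ as $t\to T$, for \emph{every} $p\in\Sigma$.

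It remains to pass to the limit under the integral sign. First I would record that $\Sigma$ is Borel: setting $\Sigma_{k,j}:=\bigl\{\,p\in M_0 : H^2(F(p,t),t)\,(T-t)\ge 1/k\ \text{ for all } t\in[T-1/j,\,T)\,\bigr\}$, each $\Sigma_{k,j}$ is closed, being the superlevel set $\{\ \cdot\ \ge 1/k\}$ of the upper semicontinuous function $p\mapsto\inf_{t\in[T-1/j,T)}H^2(F(p,t),t)(T-t)$, and by the definition of $\Sigma_H$ one has $\Sigma=\Sigma_H=\bigcup_{k,j\ge 1}\Sigma_{k,j}$, so $\Sigma$ is $F_\sigma$. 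Since $\mu_0(\Sigma)\le\mu_0(M_0)<\infty$, dominated convergence (applied along the sequence $t_i=T-1/i$, say; the full limit exists because $t\mapsto\mu_t(\Sigma)$ is non-increasing) gives
\begin{equation*}
\lim_{t\to T}\mu_t(\Sigma)=\lim_{t\to T}\int_{\Sigma}J(p,t)\,d\mu_0(p)=\int_{\Sigma}\Bigl(\lim_{t\to T}J(p,t)\Bigr)\,d\mu_0(p)=0 .
\end{equation*}

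The only genuinely nontrivial ingredient is Theorem~\ref{thm-all-sing-sets1} itself, i.e. the sharp lower bound $\Sigma_H=\Sigma$; granting it, there is no real obstacle in the argument above. The one point requiring a little care is that the constants $\delta_p,\tau_p$ depend on $p$ and need not be uniform over $\Sigma$ — which is precisely what the $F_\sigma$ decomposition $\Sigma=\bigcup_{k,j}\Sigma_{k,j}$ is used for (to make $\Sigma$ measurable), after which the pointwise decay $J(p,t)\to 0$ holds for each fixed $p$ irrespective of how $\delta_p,\tau_p$ vary, and dominated convergence does the rest.
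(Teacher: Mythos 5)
Your proof is correct, and it uses the same two essential ingredients as the paper: Theorem \ref{thm-all-sing-sets1} (so that $\int_0^T H^2(F(p,s),s)\,ds=\infty$ for every $p\in\Sigma=\Sigma_H$) and the evolution $\frac{d}{dt}d\mu_t=-H^2 d\mu_t$, which gives the Jacobian representation $\mu_t(S)=\int_S e^{-\int_0^t H^2 ds}\,d\mu_0$. Where you diverge from the paper is in how the limit is passed under the integral: the paper, following \cite{EMT}, stratifies $\Sigma$ by the sets $\Sigma_{H,k}=\{p: \abs{H}^2(F(p,t),t)\ge \frac{1/k}{T-t}\ \forall t\in[T-1/k,T)\}$, derives the explicit quantitative estimate $\mu_t(\Sigma_{H,k}\setminus\Sigma_{H,k-1})\le 2(T-t)^{1/k}\mu_0(\Sigma_{H,k}\setminus\Sigma_{H,k-1})$ on each stratum, and then sums the series using $\mu_0(M_0)<\infty$; you instead run a soft argument, noting that the Jacobian $J(p,t)$ tends to $0$ pointwise on $\Sigma$ and is dominated by the constant $1\in L^1(\mu_0)$, so dominated convergence finishes the proof, with your double-indexed decomposition $\Sigma=\bigcup_{k,j}\Sigma_{k,j}$ serving only to show $\Sigma$ is Borel. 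Your route is shorter and makes explicit a measurability point the paper passes over in silence (it integrates over $\Sigma_{H,k}\setminus\Sigma_{H,k-1}$ without comment), while the paper's route buys a quantitative decay rate $(T-t)^{1/k}$ on each stratum, which your soft argument does not produce; both are perfectly valid, and the nonuniformity of $\delta_p,\tau_p$ that you flag is indeed harmless in either formulation.
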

For the case of the mean curvature flow with $H \ge -C$, having type-I singularities, we can 
prove a stronger statement. For this purpose, we define a special blow-up set $\Sigma_{H}^{\delta} \subset \Sigma_H$, as the set of all points $p\in M_{0}$
such that $H(F(p,t),t)\geq \frac{1}{\sqrt{(2+\delta)(T-t)}}$ for $t$ sufficiently close to $T$. Here $\delta$ is a given positive number.
We will prove the following result:
\begin{theorem}
\label{thm-all-sing-sets}
Let $(M_t)$ be a closed, type I mean curvature flow in $\mathbb{R}^{n+1}$, with $H \ge -C$ for all $t\in [0,T)$. Then $\Sigma^{\delta}_H = \Sigma$.
\end{theorem}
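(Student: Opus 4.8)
The plan is to establish the nontrivial inclusion $\Sigma\subseteq\Sigma_H^{\delta}$; the reverse inclusion $\Sigma_H^{\delta}\subseteq\Sigma$ is already contained in the chain $\Sigma_H\subseteq\Sigma_A\subseteq\Sigma_s\subseteq\Sigma_g\subseteq\Sigma$. By Theorem \ref{thm-all-sing-sets1} all the sets in that chain coincide, so a given $p\in\Sigma$ lies in $\Sigma_H$, i.e.\ there is $\delta_0>0$ with $H^2(F(p,t),t)\ge\frac{\delta_0}{T-t}$ for $t$ near $T$; on the other hand (\ref{typeI}) together with the elementary inequality $H^2\le n\abs{A}^2$ gives $H^2(F(p,t),t)\le\frac{nC_0}{T-t}$. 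In particular $\abs{\partial_t F(p,t)}=\abs{H(F(p,t),t)}\le\sqrt{nC_0/(T-t)}$ is integrable up to $T$, so $x_0:=\lim_{t\to T}F(p,t)$ exists and $\abs{F(p,t)-x_0}\le 2\sqrt{nC_0}\,\sqrt{T-t}$.

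Next I would carry out Huisken's continuous rescaling about $(x_0,T)$: with $\tau=-\log(T-t)$, set $\tilde F(\cdot,\tau)=(F(\cdot,t)-x_0)/\sqrt{T-t}$, so that the rescaled hypersurfaces $\tilde M_\tau$ evolve by the rescaled mean curvature flow and satisfy the scaling relations $\tilde H=\sqrt{T-t}\,H$ and $\abs{\tilde A}^2=(T-t)\abs{A}^2\le C_0$. The distance estimate above gives $\abs{\tilde F(p,\tau)}\le 2\sqrt{nC_0}$ for all $\tau$, so the marked points remain in a fixed ball $B$. By Huisken's monotonicity formula together with (\ref{typeI}), every sequence $\tau_i\to\infty$ admits a subsequence along which the time-translated flows $\tau\mapsto\tilde M_{\tau+\tau_i}$ converge in $C^\infty_{\mathrm{loc}}$ to a smooth, complete, embedded self-shrinker $\tilde M_\infty$ with $\abs{A_{\tilde M_\infty}}\le\sqrt{C_0}$ and polynomial volume growth; passing to a further subsequence, $\tilde F(p,\tau_i)\to q\in\tilde M_\infty\cap\overline{B}$ and $\tilde H(\tilde F(p,\tau_i),\tau_i)\to\tilde H_\infty(q)$.

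The heart of the argument is to identify $\tilde M_\infty$ using the one-sided bound $H\ge -C$. Since $\tilde H=\sqrt{T-t}\,H\ge -\sqrt{T-t}\,C\to 0$ as $t\to T$, the limit satisfies $\tilde H_\infty\ge 0$; while $\tilde H^2(\tilde F(p,\tau),\tau)=(T-t)H^2(F(p,t),t)\ge\delta_0$ together with the previous lower bound forces $\tilde H(\tilde F(p,\tau),\tau)\ge\sqrt{\delta_0}$ for all large $\tau$, so $\tilde H_\infty(q)\ge\sqrt{\delta_0}>0$ and $\tilde M_\infty$ is not a hyperplane. The strong maximum principle for the elliptic equation satisfied by $\tilde H_\infty$ on $\tilde M_\infty$ then yields $\tilde H_\infty>0$ everywhere, and Huisken's classification of self-shrinkers with positive mean curvature and bounded second fundamental form (\cite[Theorem 5.1]{Huisken93}) forces $\tilde M_\infty=\SS^{k}(\sqrt{2k})\times\RR^{n-k}$, up to a rigid motion, for some $1\le k\le n$. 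On such a shrinking cylinder $\tilde H_\infty\equiv k/\sqrt{2k}=\sqrt{k/2}\ge\frac{1}{\sqrt{2}}$, hence $\sqrt{T-t}\,H(F(p,t),t)=\tilde H(\tilde F(p,\tau),\tau)\to\sqrt{k/2}\ge\frac{1}{\sqrt{2}}$ along the chosen subsequence.

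As $\sqrt{T-t}\,H$ is bounded (by $\sqrt{nC_0}$, again by (\ref{typeI})) and the same limiting relation can be extracted along a subsequence of any sequence $t_i\to T$, we conclude $\liminf_{t\to T}\sqrt{T-t}\,H(F(p,t),t)\ge\frac{1}{\sqrt{2}}>\frac{1}{\sqrt{2+\delta}}$; therefore $H(F(p,t),t)\ge\frac{1}{\sqrt{(2+\delta)(T-t)}}$ for $t$ close enough to $T$, i.e.\ $p\in\Sigma_H^{\delta}$. This gives $\Sigma\subseteq\Sigma_H^{\delta}$, and with the reverse inclusion, $\Sigma_H^{\delta}=\Sigma$. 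I expect the main difficulty to lie not in this final comparison of constants but in the analytic machinery used in the second paragraph: the smooth subconvergence of the type I rescaled flow to a complete, embedded self-shrinker of bounded curvature and polynomial volume growth (so that Huisken's classification applies), and the control of the marked point $\tilde F(p,\tau)$ through the limit. This rests on Huisken's monotonicity formula and (\ref{typeI}) just as in the proofs of Theorems \ref{mainthm} and \ref{thm-all-sing-sets1}; the genuinely new ingredient is the use of the lower bound $H\ge -C$ to pin $\tilde M_\infty$ down to a generalized cylinder and thereby read off the explicit constant $\tfrac{1}{\sqrt{2}}$.
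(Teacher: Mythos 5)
Your proposal is correct, and its analytic core coincides with the paper's: rescale at type I rate about the point reached by $F(p,t)$, use $H\ge -C$ to get $\tilde H_\infty\ge 0$ on the limiting self-shrinker, invoke Huisken's classification \cite[Theorem 5.1]{Huisken93} of mean convex self-shrinkers with bounded second fundamental form, and compare the explicit mean curvature of the generalized cylinders with the threshold coming from $\Sigma_H^{\delta}$. Where you diverge is in how the flat case is handled, and in the overall logical structure. The paper argues by contradiction: if $p\in\Sigma\setminus\Sigma_H^{\delta}$, the blow-up along the bad sequence $t_i$ produces a smooth self-shrinker with $\tilde H_\infty(0)\le\sqrt{2/(2+\delta)}$, which by the classification must be a hyperplane, and then White's regularity theorem shows $|A|$ stays bounded near $p$, i.e.\ $p\notin\Sigma$; this proof never uses Theorem \ref{thm-all-sing-sets1} and is deliberately lightweight (it is given before the pseudolocality argument precisely because it is self-contained). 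You instead run a direct argument that imports Theorem \ref{thm-all-sing-sets1} — whose proof is the hard part of the section, resting on the Chen--Yin pseudolocality theorem — to know in advance that $|\tilde H_\infty(q)|\ge\sqrt{\delta_0}>0$ at the marked point, which rules out the hyperplane without White's theorem, and you then read off the uniform bound $\liminf_{t\to T}\sqrt{T-t}\,H(F(p,t),t)\ge 1/\sqrt{2}$ from the cylinders, which is marginally stronger than membership in $\Sigma_H^{\delta}$ for a fixed $\delta$. So your route buys a cleaner quantitative statement at the cost of a heavier prerequisite, while the paper's buys independence from Theorem \ref{thm-all-sing-sets1}. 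Two small points to tighten, both at the same level of rigor as the paper itself: the strong maximum principle applied to $\mathcal{L}\tilde H_\infty+|\tilde A_\infty|^2\tilde H_\infty=\tilde H_\infty$ gives $\tilde H_\infty>0$ only on the connected component containing $q$ (which is all you need, since the conclusion concerns the value at $q$); and Huisken's list also contains $\Gamma\times\mathbb{R}^{n-1}$ with $\Gamma$ a non-circular Abresch--Langer curve, whose minimum curvature would spoil the constant $1/\sqrt{2}$ — this case is excluded because the limit is embedded, the same implicit step taken in the paper's proof, and it is worth saying so explicitly.
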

\begin{remark}
 For the shrinking spheres, we have $H(F(p,t),t)= \frac{1}{\sqrt{2(T-t)}}$. Thus the blow-up rate for the mean curvature in Theorem \ref{thm-all-sing-sets}
is sharp.
\end{remark}
\begin{remark}
 As can be seen from the proof, we can replace the lower bound $H\geq -C$ in Theorem \ref{thm-all-sing-sets} by
\begin{equation*}
 \min_{M_{t}} H(\cdot,t)\geq \frac{-c(t)}{\sqrt{T-t}}
\end{equation*}
where $c(t)\rightarrow 0$ as $t\rightarrow T$.
\end{remark}
 \h In this paper as well as in \cite{LS2}, the classification of self-shrinkers plays an important role. More relevant to our theorems is the question: under what conditions
can we conclude that a self-shrinker is a hyperplane? There are two commonly used conditions in the literature:
\begin{myindentpar}{1cm}
 $\bullet$ Any smooth self-shrinker with mean curvature zero must be a hyperplane \cite[Corollary 2.8]{CM}.\\
$\bullet$ Any self-shrinker with entropy sufficiently close to one (which is the entropy of the hyperplane) must be flat. This is Brakke's theorem \cite{Brakke}.
\end{myindentpar}
We offer another criterion in this paper. First, we recall the definition of a self-shrinker {\footnote{When a precise normalization in the definition
of a self-shrinker is not important, we can use different normalizations of a self-shrinker in this paper, especially 
in the proof of Theorem \ref{thm-all-sing-sets1}. This will make the notation 
less heavier.}} that we will use
in the statement of our gap theorem. A hypersurface $\Sigma$ is said to be 
a {\emph{self-shrinker}} if it 
satisfies the equation
\begin{equation*}
 H = \langle x,\nu\rangle.
\end{equation*}
Equivalently, a hypersurface is said to be a {\emph{self-shrinker}} if it 
is the time $t=-\frac{1}{2}$ slice{\footnote{In \cite{CM}, Colding and Minicozzi define self-shrinkers 
to be the time $t= - 1$ slice of a self-shrinking MCF; consequently, they get that $H = \frac{1}{2}\langle x , \nu \rangle$.}} 
of a self-shrinking mean curvature flow 
(MCF) that disappears at $(0,0)$, i.e., of a MCF satisfying  $M_t = \sqrt{-2t} \, M_{-\frac{1}{2}}$.  
Our gap result is concerned with self-shrinkers whose second fundamental forms have small norm:
\begin{theorem}
 If the hypersurface $\Sigma\subset \RR^{n+1}$ is a smooth complete embedded self-shrinker without boundary and with polynomial volume growth, and satisfies
$\abs{A}^{2}< 1$ then $\Sigma$ is a hyperplane.
\label{gap-boundA}
\end{theorem}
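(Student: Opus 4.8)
The key identity here is the drift-Laplacian (Colding–Minicozzi operator) $\mathcal{L} = \Delta - \langle x, \nabla \cdot \rangle$ acting on a self-shrinker $\Sigma$, together with the two Simons-type evolution equations
\begin{equation*}
\mathcal{L} H = H - |A|^2 H, \qquad \mathcal{L} |A|^2 = 2|A|^2 - 2|A|^4 + 2|\nabla A|^2 - \ldots
\end{equation*}
Actually the cleanest route uses only the equation for $H$. First I would recall that on a self-shrinker, $\mathcal{L} H = (1 - |A|^2) H$. The operator $\mathcal{L}$ is self-adjoint with respect to the Gaussian weight $e^{-|x|^2/2}$, and the hypothesis of polynomial volume growth guarantees that all the integrations by parts below are justified (this is precisely the setting in which Colding–Minicozzi prove their results). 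The hypothesis $|A|^2 < 1$ makes the zeroth-order coefficient $(1 - |A|^2)$ strictly positive.

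The main step is to test the equation for $H$ against $H$ itself with the Gaussian weight. Integrating $H \cdot \mathcal{L} H = (1-|A|^2)H^2$ over $\Sigma$ against $e^{-|x|^2/2}\,d\mu$ and integrating by parts on the left gives
\begin{equation*}
-\int_\Sigma |\nabla H|^2 \, e^{-|x|^2/2}\, d\mu = \int_\Sigma (1 - |A|^2) H^2 \, e^{-|x|^2/2}\, d\mu.
\end{equation*}
The left side is $\le 0$ while the right side is $\ge 0$ since $|A|^2 < 1$; hence both sides vanish. From the vanishing of the right side and strict positivity of $1 - |A|^2$ we conclude $H \equiv 0$ on $\Sigma$. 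Then by \cite[Corollary 2.8]{CM} (quoted in the excerpt: any smooth self-shrinker with $H \equiv 0$ is a hyperplane), $\Sigma$ is a hyperplane, which is the conclusion. Alternatively, once $H \equiv 0$ one can finish directly: $H = \langle x, \nu\rangle = 0$ means $\Sigma$ is a minimal cone-like object through the origin, and combined with smoothness and embeddedness it must be a hyperplane.

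**The main obstacle.** The analytic subtlety is justifying the integration by parts, i.e. that there are no boundary terms at infinity when integrating $\int_\Sigma H \,\mathcal{L} H \, e^{-|x|^2/2}$. This requires controlling the growth of $H$ and $\nabla H$ relative to the Gaussian weight. Here the polynomial volume growth hypothesis is essential, and one invokes the general self-improvement: since $|A|^2 < 1$ is bounded, standard elliptic estimates for the shrinker equation give polynomial (indeed, at-most-linear-in-$|x|$) bounds on $H$ and its derivatives, which are crushed by $e^{-|x|^2/2}$; this is the content of the integration-by-parts lemmas in \cite{CM}. A cleaner alternative that sidesteps even this is to note that $|A|^2 < 1$ forces $\Sigma$ to be "close to flat" in a way that makes the drift operator have a positive spectral gap on $H$; but the weighted $L^2$ argument above is the most transparent. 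Once the integration by parts is in place, the rest is immediate from the sign of $1 - |A|^2$.
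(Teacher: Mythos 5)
Your argument is correct and is essentially the paper's own proof: both test $\mathcal{L}H=(1-|A|^2)H$ against $H$ in the Gaussian-weighted $L^2$ space, justify the integration by parts via the pointwise bounds $|H|^2<n$, $|\nabla H|\le |A||x|\le |x|$ together with polynomial volume growth (this is the Colding--Minicozzi identity (9.42) / Corollary 3.10 route), and conclude $H\equiv 0$, hence a hyperplane by \cite[Corollary 2.8]{CM}. The only cosmetic difference is that you invoke ``elliptic estimates'' for the growth of $H$ and $\nabla H$, whereas these bounds follow directly from $|A|^2<1$ and differentiating the shrinker equation.
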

\begin{remark}
A similar gap result for Ricci soliton has been obtained by Munteanu and Wang \cite[Corollary 1]{MW}. Yokota \cite{Yo} obtained a gap theorem 
concerning the normalized f-volume
for gradient shrinking Ricci solitons.
\end{remark}
\begin{remark}
 The curvature bound $\abs{A}^{2}< 1$ in Theorem \ref{gap-boundA} is optimal. $\RR^{k}\times \SS^{n-k}(\sqrt{n-k})$, for $0<k<n$, are nonflat self-shrinkers with
$\abs{A}^{2} = 1.$
\end{remark}

\h The rest of the paper is organized as follows. In Section \ref{blowuprate}, we prove Theorems \ref{mainthm}, \ref{BLR}, and \ref{3dBLR}. In Section \ref{singsets} we prove 
Theorems \ref{thm-all-sing-sets1}, \ref{thm-all-sing-sets} and Corollary \ref{zero-vol}. The proof of theorem
\ref{gap-boundA} will be given in Section \ref{gapthm}.
 
\section{Blow-up rate of the mean curvature}
\label{blowuprate}
\h In this section, we will prove Theorems \ref{mainthm}, \ref{BLR} and \ref{3dBLR} on the blow-up rate of the mean curvature during the mean curvature flow 
having type-I singularities.
\begin{proof}[Proof of Theorem \ref{BLR}] We first prove (\ref{MCrate}).
We argue by contradiction. Suppose otherwise that
\begin{equation}
  \lim_{t\rightarrow T}\mathrm{max}_{M_{t}} \abs{H}^2(\cdot, t) (T-t) =0.
\label{BLratezero}
\end{equation}
Without loss of generality, assume that $M^{n}\subset B_{1}(0) \subset \RR^{n+1}$. Let $y_{0}\in \RR^{n+1}$ be a point reached by the mean curvature 
flow (\ref{MCF1}) at time $T$, that is, there exists a sequence $(y_{j}, t_{j})$ with $t_{j}\nearrow T$ so that $y_{j}\in M_{t_{j}}$ and 
$y_{j}\rightarrow y_{0}$. We show that $(y_{0}, T)$ must be a regular point of (\ref{MCF1}) and this will contradict the assumption that $T$ is the first
singular time. \\
\h Without loss of generality, assume that $y_{0}=0\in R^{n+1}$ is a singular point of the mean curvature flow. Then,
following Huisken \cite{Huisken90}, we define the rescaled immersions $\tilde{F}(p,s)$ by
\begin{equation}
 \tilde{F}(p,s) = (2(T-t))^{-1/2} F(p,t),~s(t) =-\frac{1}{2} \log (T-t).
\label{contscale}
\end{equation}
This is the {\it continuous rescaling} that is crucial in our proofs. The surfaces $\tilde{M}_{s} = \tilde{F}(\cdot,s) (M^{n})$ are therefore defined for $-\frac{1}{2}\log T \leq s<\infty$ and satisfy the equation
\begin{equation}
 \frac{d}{ds}\tilde{F}(\cdot, s)= -\tilde{H}(\cdot, s) \tilde{\nu}(\cdot, s) + \tilde{F}(\cdot, s).
\end{equation}
In view of  (\ref{typeI}), the rescaled surfaces $\tilde{M}_{s}$ have bounded curvature. By the smoothness estimate \cite{EH}, one can prove estimates for all
higher derivatives of the second fundamental form
\begin{equation}
 \abs{\tilde{\nabla} \tilde{A}}^2 \leq C_{m}(C_{0})~\forall m\geq 1.
\label{regest}
\end{equation}
Furthermore, because $F(0, t)\rightarrow 0$ as $t\rightarrow T$, using (\ref{typeI}) again, we find that the term $\tilde{F}(0, s)$ remains bounded. This follows
from the estimate
\begin{equation*}
 \abs{F(0, t)} \leq \int_{t}^{T} \abs{H(0,\tau)} d\tau \leq \int_{t}^{T} \frac{n^{1/2} C_{0}}{ (T-\tau)^{1/2}} d\tau \leq C (2(T-t))^{1/2}.
\end{equation*}
Hence we have the convergence $\tilde{M}_{s_{j}}\rightarrow \tilde{M}_{\infty}$ for a sequence
of times $s_{j}\rightarrow \infty.$  \\
\h Now, let $\tilde{\rho}(x) = \frac{e^{-\frac{1}{2}\abs{x}^2}}{(2\pi)^{n/2}}$. Then Huisken's normalized monotonicity formula \cite{Huisken90} reads
\begin{equation}
\label{eq-huis-mon}
 \frac{d}{ds} \int_{\tilde{M}_{s}} \tilde{\rho} d\tilde{\mu}_{s} = - \int_{\tilde{M}_{s}} \tilde{\rho} \abs{\tilde{H}\tilde{\nu} - \tilde{F}^{\perp}}^{2} d\tilde{\mu}_{s}.
\end{equation}
Here $\tilde{F}^{\perp}(\cdot,s)$ is the normal 
component of the position vector $\tilde{F}(\cdot,s)\in \RR^{n+1}$ in the normal space of $\tilde{M}_{s}$ in $\RR^{n+1}$.
From this we arrive at the following inequality
\begin{equation*}
\int_{s_{0}}^{\infty} \int_{\tilde{M}_{s}} \tilde{\rho} \abs{\tilde{H}\tilde{\nu} - \tilde{F}^{\perp}}^{2} d\tilde{\mu}_{s}\leq C.
\end{equation*}
In view of the regularity estimate (\ref{regest}) and Huisken's monotonicity formula (\ref{eq-huis-mon}), every limiting hypersurface $\tilde{M}_{\infty}$ satisfies the equation
\begin{equation}
 \tilde{H}_{\infty} = <\tilde{x}_{\infty} , \tilde{\nu}_{\infty}>. 
\label{selfsim}
\end{equation}
On the other hand, by (\ref{BLratezero}), we have $ \tilde{H}_{\infty}\equiv 0$. 
Thus $\tilde{M}_{\infty}$ is a minimal cone; see \cite[Corollary 2.8]{CM}.
 Because $\tilde{M}_{\infty}$ is smooth, it is a hyperplane.
In other words, the rescaled surfaces $\tilde{M}_{s}$ converge to a hyperplane. \\
\h Let $\rho_{y_{0, T}}: \RR^{n+1}\times (-\infty, T) \to \RR$ be the backward heat kernel at $(y_{0}, T)$, i.e, 
\begin{equation}
 \rho_{y_{0}, T} (y, t) = \frac{1}{[4\pi(T-t)]^{n/2}} \mathrm{exp}(-\frac{\abs{y-y_{0}}^2}{4(T-t)}).
\label{BWH}
\end{equation}
Then, the monotonicity formula of Huisken \cite{Huisken90} says that
\begin{equation}
\frac{d}{dt}\int_{M_{t}} \rho_{y_{0}, T} d\mu_{t} = -  \int_{M_{t}} \rho_{y_{0}, T} \abs{H\nu - \frac{F^{\perp}}{2(T-t)}}^2d\mu_{t},
\label{Hmono}
\end{equation}
from which it follows that the limit $\lim_{t\rightarrow T} \int_{M_{t}} \rho_{y_{0}, T} d\mu_{t}$ exists. Via the rescaling (\ref{contscale}), we have
\begin{equation}
 \int_{M_{t}} \rho_{y_0, T} d\mu_{t} = \int_{\tilde{M}_{s}} \frac{e^{-\frac{\abs{x}^2}{2}}}{(2\pi)^{n/2}} d\tilde{\mu}_{s}\equiv 
 \int_{\tilde{M}_{s}} \tilde{\rho} d\tilde{\mu}_{s}
\label{densitychange}
\end{equation}
where $d\tilde{\mu}_{s}$ is the induced volume form on $\tilde{M}_{s}$.\\
\h Because the rescaled surfaces $\tilde{M}_{s}$ converge to a hyperplane, we get that
\begin{equation}
 \lim_{s\rightarrow\infty} \int_{\tilde{M}_{s}} \frac{e^{-\frac{\abs{x}^2}{2}}}{(2\pi)^{n/2}} d\tilde{\mu}_{s} =
 \int_{\RR^{n}} \frac{e^{-\frac{\abs{x}^2}{2}}}{(2\pi)^{n/2}} dx =1.
\label{density1}
\end{equation}
Note that $s\rightarrow \infty$ as $t\rightarrow T$. Combining (\ref{densitychange}) and (\ref{density1}), we obtain
\begin{equation}
 \lim_{t\rightarrow T}  \int_{M_{t}} \rho_{y_0, T} d\mu_{t} =1.
\end{equation}
This means that the Gaussian density of $M^{n}$ at $(y_0, T)$ is 1.
By White's regularity theorem \cite{White}, the second fundamental form $\abs{A}(\cdot, t)$ of $M_{t}$ is bounded as $t\rightarrow T$
and $(y_{0}, T)$ is a regular point. \\
\h Finally, we prove (\ref{MCratealpha}). 
We use the same notion as above and argue by contradiction. Suppose otherwise that
\begin{equation}
\lim_{t\rightarrow T} (T-t)^{\frac{\alpha-n}{2\alpha}} \norm{H}_{L^{\alpha}(M_{t})} =0.
\label{BL0alpha}
\end{equation} 
Via the rescaling (\ref{contscale}), we have
\begin{equation}
 (T-t)^{\frac{\alpha-n}{2\alpha}} \norm{H}_{L^{\alpha}(M_{t})} = 2^{\frac{n-\alpha}{2\alpha}} \norm{\tilde{H}}_{L^{\alpha}(\tilde{M}_{s})}.
\label{st-alpha}
\end{equation}
Again, note that $s\rightarrow \infty$ as $t\rightarrow T$. Thus, letting $s\rightarrow\infty$ in (\ref{st-alpha}) and using (\ref{BL0alpha}), we obtain
$\norm{\tilde{H}}_{L^{\alpha}(\tilde{M}_{\infty})} =0$. Hence $\tilde{H}_{\infty} =0$ on $\tilde{M}_{\infty}$. Now, arguing
 as in the proof of (\ref{MCrate}), we
obtain a contradiction.
\end{proof}
Let us make a few observations. Without using Corollary 2.8 in \cite{CM}, one can also argue as follows.
If $\tilde{H}_{\infty}\geq 0$ in (\ref{selfsim}), then Huisken \cite{Huisken93} proved that 
$ \tilde{M}_{\infty}$ is one of the following:
\begin{myindentpar}{1cm}
 (i) $\SS^{n}$\\
(ii) $\SS^{n-m} (\sqrt{n-m})\times R^{m}$\\
(iii) $\Gamma \times R^{n-1}$
where $\Gamma$ is one of the homothetically (convex immersed) shrinking curves in $\RR^{2}$ found by Abresch and Langer \cite{AL}.
\end{myindentpar}
If we know $\tilde{H}_{\infty} =0$ at one point in $\tilde{M}_{\infty}$ then the only possibility is $ \tilde{M}_{\infty} = \RR^{n}$. This is a kind of a rigidity result
for self-shrinkers with nonnegative mean curvature. Note that, in \cite{EMT}, in order to establish the  blow-up rate of the scalar curvature at any 
singular point of type-I Ricci flow,   
the following rigidity result for gradient shrinking
solitons, due to Pigola-Rimoldi-Setti, played an important role:
\begin{lemma}\cite[Theorem 3]{PRS09}\label{rigidity}
Let $(M^{n},g,f)$ be a complete gradient shrinking soliton $R_{ij} + \nabla_{i}\nabla_{j} f = \frac{1}{2}g_{ij}$. Then the
scalar curvature $R_g$ is nonnegative, and if there exists a point
$p\in M$ where $R_g(p)=0$, then $(M,g,f)$ is the Gaussian soliton,
i.e. isometric to flat Euclidean space $(\RR^n,g_{\RR^n}, e^{\frac{\abs{x}^2}{4}})$.
\end{lemma}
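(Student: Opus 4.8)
The statement is a rigidity result for gradient shrinking Ricci solitons, and the plan is to follow the Pigola--Rimoldi--Setti argument \cite{PRS09}, which combines a soliton identity with a strong maximum principle of Omori--Yau type. First I would record the standard soliton identities: tracing $R_{ij}+\nabla_i\nabla_j f=\frac12 g_{ij}$ gives $R_g+\Delta f=\frac n2$, and the contracted second Bianchi identity together with the soliton equation yields $R_g+|\nabla f|^2=f$ (after normalizing the additive constant in $f$) and $\nabla R_g=2\,\mathrm{Ric}(\nabla f,\cdot)$. From these one derives the key evolution-type equation for the scalar curvature,
\begin{equation*}
\Delta_f R_g:=\Delta R_g-\langle\nabla f,\nabla R_g\rangle = R_g-2|\mathrm{Ric}|^2,
\end{equation*}
and, using $|\mathrm{Ric}|^2\ge \frac1n R_g^2$, the differential inequality $\Delta_f R_g \le R_g-\frac2n R_g^2$.

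Next I would establish nonnegativity of $R_g$. The difficulty is that $M$ is only complete, not compact, so the ordinary minimum principle does not apply directly; this is the main obstacle. The resolution is to use the fact (Chen, or the weighted volume comparison of Cao--Zhou / Morgan) that a complete gradient shrinking soliton has at most Euclidean volume growth, hence is $f$-nonparabolic/stochastically complete for the drift Laplacian, so the Omori--Yau maximum principle holds for $\Delta_f$. Applying it to a function built from $R_g$ (e.g. to $\min\{R_g,0\}$ or via a suitable test-function argument against the differential inequality $\Delta_f R_g\le R_g-\frac2n R_g^2$) forces $\inf R_g\ge 0$: at an approximate minimizing sequence where $R_g$ is nearly its infimum and $\Delta_f R_g$ is nearly nonnegative, the inequality $0\lesssim \inf R_g - \frac2n(\inf R_g)^2$ together with boundedness considerations yields $\inf R_g\ge 0$. (One must check that $R_g$ is bounded below, or otherwise truncate; this is handled by the Omori--Yau machinery.)

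Having $R_g\ge 0$, I would invoke the strong maximum principle. The function $R_g\ge 0$ satisfies $\Delta_f R_g = R_g-2|\mathrm{Ric}|^2$. If $R_g(p)=0$ at some point $p$, then $p$ is an interior minimum of the nonnegative function $R_g$; the strong maximum principle for the elliptic operator $\Delta_f$ (which has the same qualitative properties as $\Delta$, since it differs by a smooth first-order term) forces $R_g\equiv 0$ on $M$. Plugging $R_g\equiv 0$ back into $\Delta_f R_g=R_g-2|\mathrm{Ric}|^2$ gives $|\mathrm{Ric}|^2\equiv 0$, i.e. the soliton is Ricci-flat; then the soliton equation reduces to $\nabla^2 f=\frac12 g$, and a complete manifold admitting such a function is isometric to Euclidean space (this is classical: the integral curves of $\nabla f$ are geodesics, $|\nabla f|^2=f$ up to the normalization, and $f$ is, after translation, $\frac14$ times the distance-squared from its unique critical point, forcing $M\cong\RR^n$). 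The weight $e^{-f}$ is then $e^{-|x|^2/4}$ up to sign conventions, giving the Gaussian soliton. The one place to be careful with conventions is the normalization of $f$ and the exponent: matching $R_{ij}+\nabla_i\nabla_j f=\frac12 g_{ij}$ with the Euclidean model $\nabla^2(\tfrac{|x|^2}{4})=\frac12 g$ gives $f=\tfrac{|x|^2}{4}$, consistent with the measure $e^{\frac{|x|^2}{4}}$ stated (here the sign in the statement reflects the author's convention of writing the density rather than $e^{-f}$).
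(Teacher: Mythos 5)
The paper does not prove this lemma at all: it is imported verbatim as \cite[Theorem 3]{PRS09} (Pigola--Rimoldi--Setti) and used only as background to explain why the Ricci-flow analogue in \cite{EMT} admits a pointwise statement, so there is no in-paper argument to measure your proposal against. What you have written is essentially a reconstruction of the cited source's proof, and in outline it is correct: the traced soliton identities, the drift equation $\Delta_f R_g = R_g - 2\abs{\ric}^2$ for the normalization $\lambda=\tfrac12$, the weak (Omori--Yau type) maximum principle for $\Delta_f$ to get $\inf R_g \ge 0$, the strong maximum principle applied to $\Delta_f R_g - R_g \le 0$ at a zero of $R_g$ to force $R_g\equiv 0$, hence $\ric\equiv 0$ and $\nabla^2 f=\tfrac12 g$, and finally the classical rigidity of complete manifolds carrying such a concave-free potential (Tashiro-type argument) giving $(\RR^n, g_{\RR^n})$ with $f=\tfrac{\abs{x}^2}{4}$ up to a constant. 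Your handling of the normalization conventions matches the statement.

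The one place your sketch leans on machinery you only gesture at is the justification of the maximum principle at infinity for $\Delta_f$: you need either that $R_g$ is bounded below with a suitable truncation, or the full $f$-parabolicity/stochastic-completeness input, which in the gradient shrinking case follows from the quadratic growth of the potential and the ensuing weighted volume estimates (Cao--Zhou); alternatively one can bypass this step entirely by quoting Chen's theorem (cited in the paper as \cite{ChenRicci}) that complete ancient solutions, in particular shrinkers, have $R_g\ge 0$, and keep only your strong maximum principle and rigidity steps. Either completion is standard, so I would regard the proposal as correct modulo making that one step precise.
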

Because the scalar curvature of a complete gradient shrinking soliton is nonnegative, the blow-up rate in \cite{EMT} was established at each singular point of the Ricci 
flow. For the mean curvature flow, the mean curvature of the self-shrinker $\tilde{M}_{\infty}$ satisfying (\ref{selfsim}) can possibly be negative, and therefore a pointwise statement 
for the blow-up rate of the mean curvature needs to be argued differently (see Theorem \ref{thm-all-sing-sets1}).
\begin{proof}[Proof of Theorem \ref{mainthm}]
 We argue by contradiction. Suppose otherwise that
\begin{equation}
 \limsup_{t\rightarrow T} \sqrt{T-t}~ \mathrm{max}_{M_{t}} ~H(\cdot, t)\leq 0.
\label{Hrate2}
\end{equation}
We will prove that any point $y_{0}$ reached by our mean curvature flow at time $T$ must be a 
regular point of (\ref{MCF1}) and this will contradict the assumption that $T$ is the first
singular time. We use the same rescaling as in the proof of Theorem \ref{BLR} and obtain in the limit a self-shrinker $\tilde{M}_{\infty}.$ Using 
(\ref{Hrate2}), we find 
that our self-shrinker $\tilde{M}_{\infty}$ satisfies $\tilde{H}_{\infty} \leq 0.$ 
Moreover, by (\ref{typeI}), we know that the second fundamental form of $\tilde{M}_{\infty}$ is bounded. Let $L$ be the differential operator $
 L = \Delta_{\tilde{M}_{\infty}} + \abs{\tilde{A}_{\infty}}^2 - <x, \nabla_{\tilde{M}_{\infty}}>.$
Then Huisken \cite[Theorem 5.1]{Huisken93} ( see also Colding and Minicozzi \cite[Lemma 5.5]{CM}) showed that $L\tilde{H}_{\infty} = \tilde{H}_{\infty}$. From the nonpositivity of $\tilde{H}_{\infty}$ and 
Harnack inequality, we can conclude that $\tilde{H}_{\infty}$ is either strictly negative or identically zero. The first case could not happen which 
follows from the proof of  Huisken's classification result \cite[Theorem 5.1]{Huisken93}.  For the convenience of a reader we will sketch it here.\\
\h Let $e_{1}, \cdots, e_{n},\nu_{\infty}$ be an adopted orthonormal frame. If $\tilde{H}_{\infty} < 0$ everywhere on $\tilde{M}_{\infty}$ we can 
consider the quantity $\frac{|\tilde{A}_{\infty}|^2}{\tilde{H}_{\infty}^2}$. Then, simple calculation shows that
\begin{eqnarray*}
\Delta\left(\frac{|\tilde{A}_{\infty}|^2}{\tilde{H}_{\infty}^2}\right) &=& \frac{2}{\tilde{H}_{\infty}^4}\abs{(\tilde{h}_{\infty})_{ij} \nabla_l \tilde{H}_{\infty} - \nabla_l (\tilde{h}_{\infty})_{ij} \tilde{H}_{\infty}}^2 \\
&-& \frac{2}{\tilde{H}_{\infty}}\nabla_i\tilde{H}_{\infty}\nabla_i\left(\frac{|\tilde{A}_{\infty}|^2}{\tilde{H}_{\infty}^2}\right) + \langle x, e_i\rangle \nabla_i\left(\frac{|\tilde{A}_{\infty}|^2}{\tilde{H}_{\infty}^2}\right).
\end{eqnarray*}
We multiply the equation by $|\tilde{A}_{\infty}|^2 \rho$, where $\rho$ is the rescaled heat 
kernel $e^{-\frac{|x|^2}{2}}$. Integrating by parts yields to
$$\int_{\tilde{M}_{\infty}} \abs{\nabla\left(\frac{|\tilde{A}_{\infty}|^2}{\tilde{H}_{\infty}^2}\right)}^2\rho\, d\mu + 2\int_{\tilde{M}_{\infty}}\frac{|\tilde{A}_{\infty}|^2}{\tilde{H}_{\infty}^4}|(\tilde{h}_{\infty})_{ij}\nabla_k \tilde{H}_{\infty} - \nabla_i(\tilde{h}_{\infty})_{jk}\tilde{H}_{\infty}|^2\rho\, d\mu = 0.$$
Huisken shows that in this case a complete and embedded 
self shrinker $\tilde{M}_{\infty}$ has to be of the form $\SS^{n-m}{\sqrt{n-m}}\times\mathbb{R}^m$, for $0 \le m \le n$, in which case $\tilde{H}_{\infty} \ge 0$, which 
contradicts our assumption that $\tilde{H}_{\infty} < 0$ everywhere on $\tilde{M}_{\infty}$
Here, we have adopted the convention of the outward unit normal vector when talking about the 
mean curvature and geometric quantities defined with respect to the normal vector such as is the mean curvature of the hypersurfaces under consideration. 

Thus we are left with the case $\tilde{H}_{\infty} =0$. Therefore, 
$\tilde{M}_{\infty}$ is a hyperplane. Now arguing as in the proof of Theorem \ref{BLR}, we can conclude that
$(y_{0}, T)$ must be a 
regular point of (\ref{MCF1}).
\end{proof}
We will now prove Theorem \ref{3dBLR}  in which we restrict ourselves to the case when $n = 2$, but we allow all possible types of singularities 
to happen at a finite singular time $T < \infty$. We adopt the proof from \cite{LS2} to show that the blow up rate of the mean curvature 
at the first singular time must be $(T-t)^{-\frac{1}{2}}$. The proof of (a) is very similar to the proof of Theorem 1.5 in \cite{LS2}, except that
we use here the continuous rescaling. The proof of (b) is a bit different. Though we also use the continuous rescaling, our limiting
self-shrinkers does not necessary have bounded second fundamental form. Thus we have to be more careful when dealing with the classification issues.
Huisken's classification result \cite[Theorem 5.1]{Huisken93} does not apply.
Thanks to Colding-Minicozzi \cite[Theorem 0.17]{CM}, this is not a problem.  
For the reader's convenience we will include the detailed proof below.

\begin{proof}[Proof of Theorem \ref{3dBLR}]
In this proof, $n=2$. Without loss of generality, assume that $M^{2}\subset B_{1}(0) \subset \RR^3$. Let $y_{0}\in \RR^3$ be a point 
reached by the mean curvature 
flow (\ref{MCF1}) at time $T$, that is, there exists a sequence $(y_{j}, t_{j})$ with $t_{j}\nearrow T$ so that $y_{j}\in M_{t_{j}}$ and 
$y_{j}\rightarrow y_{0}$. We show that $(y_{0}, T)$ is a regular point of (\ref{MCF1}) provided that (\ref{BL3d}) and (\ref{below2}) are satisfied. \\
\h We can assume that $y_{0} =0.$ Then,
following Huisken \cite{Huisken90}, we define the rescaled immersions $\tilde{F}(p,s)$ by
\begin{equation}
 \tilde{F}(p,s) = (2(T-t))^{-1/2} F(p,t)\equiv \lambda(s) F(p, t),~s(t) =-\frac{1}{2} \log (T-t).
\label{contscale3d}
\end{equation}
The surfaces $\tilde{M}_{s} = \tilde{F}(\cdot,s) (M^{n})$ are therefore defined for $-\frac{1}{2}\log T \leq s<\infty$ and satisfy the equation
\begin{equation}
 \frac{d}{ds}\tilde{F}(\cdot, s)= -\tilde{H}(\cdot, s) \tilde{\nu}(\cdot, s) + \tilde{F}(\cdot, s).
\end{equation}
The induced volume form of $\tilde{M}_{s}$ is denoted by $\tilde{\mu}_{s}$.\\
 For any set $A\subset \RR^{n +1}$, let 
us define the parabolically rescaled measures at $(y_{0}, T)$:
\begin{equation*}
 \mu^{\lambda(s)} (A) = [\lambda (s)]^{-n} \mathcal{H}^{n}\lfloor \tilde{M}_{s} (\lambda(s)\cdot A).
\end{equation*}
Here $\mathcal{H}^{n}$ is the n-dimensional Hausdorff measure.
Now, let $\tilde{\rho}(x) = \frac{1}{(2\pi)^{n/2}}\exp (-\frac{1}{2}\abs{x}^2)$. Then Huisken's normalized monotonicity formula \cite{Huisken90} reads
\begin{equation}
 \frac{d}{ds} \int_{\tilde{M}_{s}} \tilde{\rho} d\tilde{\mu}_{s} = - \int_{\tilde{M}_{s}} \tilde{\rho} \abs{\tilde{H}\tilde{\nu} - \tilde{F}^{\perp}}^{2} d\tilde{\mu}_{s}.
\label{normalmono}
\end{equation}
Because $M$ is a compact, smooth and embedded 2-dimensional manifold in $\RR^{3}$, the following local area bound holds
\begin{equation*}
 \mathcal{H}^{2}(M\cap B_{R}(x)) \leq CR^{2}, \forall R>0, x\in \RR^{3}.
\label{AreaboundM}
\end{equation*}
Using Huisken's monotonicity formula \cite{Huisken90}, we can prove that (see, for example \cite[Lemma 2.9]{CM})
\begin{equation}
  \mathcal{H}^{2}(M_{t}\cap B_{R}(x)) \leq CR^{2}, \forall R>0, x\in \RR^{3}, 0\leq t<T.
\label{AreaboundMt}
\end{equation}
It follows that
\begin{equation}
\mu^{\lambda(s)} (B_{R}(x)) \leq C R^{2},\forall x\in \RR^{3}, R>0, -\frac{1}{2}\log T \leq s<\infty.
 \label{areabound}
\end{equation}

{\it Using the area bound (\ref{areabound}), and the normalized monotonicity formula (\ref{normalmono}), we 
can follow the proof of the Theorem on weak existence of blowups in Ilmanen \cite[Lemma 8, p. 14]{I1} to show 
that} 
there exists a subsequence of $\lambda(s)$ as $s\rightarrow\infty$ such that
$\mu^{\lambda(s)}\rightharpoonup \mu^{\infty}$ in the sense of 
Radon measures and the following
statements hold:\\
(a) (self-similarity) $\mu^{\infty}(A) = \lambda^{-n}\mu^{\infty} (\lambda\cdot A)$, for all $\lambda>0$\\
(b) (limit measure is a self-shrinker) $\mu^{\infty}$ satisfies
\begin{equation}
 \overrightarrow{H}(x) + S(x)^{\perp}\cdot x =0, ~\mu_{\infty}~ \mathrm{a. e.}~ x
\end{equation}
(c) Furthermore, Huisken's normalized integral converges
\begin{equation}
 \int \tilde{\rho} d\mu^{\infty} = \lim_{s\nearrow \infty} \int \tilde{\rho}d\mu^{\lambda(s)} \,\,\, .
\label{Huiskenconv}
\end{equation}
Note that, by Allard's Compactness Theorem \cite{S} and the fact that $\int_{B_{R}(x)} \abs{\overrightarrow{H}^{\lambda(s)}}^2$ is bounded 
for each $R>0$,  the 
Radon measure $\mu^{\infty}$ is
integer $2$-rectifiable, that is
\begin{equation*}
 d\mu^{\infty} = \theta(x)d\mathcal{H}^{2}\lfloor X_{\infty}
\end{equation*}
where $X_{\infty}$ is an $\mathcal{H}^{2}$-measurable, $2$-rectifiable set and $\theta$ is an $\mathcal{H}^{2}\lfloor X_{\infty}$-integrable, 
integer valued "multiplicity function". \\
\h {\it Now, using the same argument as in the proof of the $\RR^{3}$ Blow-up Theorem of Ilmanen \cite[p.29]{I2}, we can show that }
$X_{\infty}$ has to be smooth. Let us briefly explain the notations used in $(b)$. We follow the presentation used in the proof of Theorem 1.4 in \cite{LS2}
and for the sake of completeness, we include it here. \\
For a locally $n$-rectifiable Radon measure $\mu$, we define its $n$-dimensional approximate tangent plane $T_{x}\mu$ (which exists $\mu$-a.e x) by
\begin{equation*}
 T_{x}\mu (A) = \lim_{\lambda \rightarrow 0} \lambda^{-n} \mu (x + \lambda\cdot A).
\end{equation*}
The tangent plane $T_{x}\mu$ is a positive multiple of $\mathcal{H}^{n}\lfloor P$ for some $n$-dimensional plane $P$. Let $S: \RR^{n +1} \longrightarrow G(n +1, n)$
denotes the $\mu-$ measurable function that maps $x$ to the geometric tangent plane, denoted by $P$ above. An important
quantity is the first variation of $\mu$, defined by $\delta V_{\mu}(X):=\int div_{S(x)}X(x) d\mu(x)$ for $X\in C^{\infty}_{c}(\RR^{n +1}, \RR^{n +1})$.  Here $div_{S}X
= \sum_{i=1}^{n} D_{e_{i}}X.e_{i}$ where $e_{1}, \cdots, e_{n}$ is any orthonormal basis of $S$. We also denote by $S$
the orthogonal projection onto $S$ and thus $div_{S}X$ can be written as $S: DX$. Now, if the total first variation $\norm{\delta V_{\mu}}$ is a Radon measure and is
absolutely continuous with respect to $\mu$, then we can 
define the generalized mean curvature vector $\overrightarrow{H} = \overrightarrow{H}_{\mu} \in L^{1}_{\mathrm{loc}}(\mu)$ of $\mu$ as follows
\begin{equation}
 \int div_{S}X d\mu =\int -\overrightarrow{H}\cdot X d\mu
\label{firstvar}
\end{equation}
for all $X\in C_{c}^{\infty}(\RR^{n +1}, \RR^{n +1})$. For further information on geometric measure theory, we refer the reader to Simon's lecture notes \cite{S}.
Note that when $\mu$ is the surface measure of a smooth $n$-dimensional manifold $M$, the generalized mean curvature vector $\overrightarrow{H}_{\mu}$ of 
$\mu$ exists and is also the classical mean curvature vector of $M$. Therefore, we can apply (\ref{firstvar})
to $\mu^{\lambda(s)}$, which is the rescaled surface measure of the smooth manifold $\tilde{M}_{s}$.
From (\ref{firstvar}) and the definition of $\mu^{\lambda(s)}$, one sees that the mean curvature vector 
$\overrightarrow{H}^{\lambda(s)}$ of $\mu^{\lambda(s)}$ 
is $\frac{\overrightarrow{H}_{t}}{\lambda(s)}$ where $\overrightarrow{H}_{t}$ is the mean curvature vector of $M_{t}$ where $t = T - e^{-2s}$.
Recall that $\lambda (s) = (2(T-t))^{-1/2}$ and $s =-\frac{1}{2} \text{log} (T-t)$. \\
(a) By (\ref{BL3d}), we have
\begin{equation*}
 \limsup_{s\rightarrow\infty} \abs{\overrightarrow{H}^{\lambda(s)}} \leq \limsup_{s\rightarrow\infty}\mathrm{max}_{M_{t}}
 \abs{\overrightarrow{H}_{t}}[(2(T-t))^{1/2}] =0.
\end{equation*}

The lower semicontinuity of $\int\abs{H} d\mu$ asserts that, for any $x\in \RR^{3}$ and $R>0$
\begin{equation*}
 \int_{B_{R}(x)}\abs{\overrightarrow{H}_{\infty}} d\mu^{\infty}\leq \liminf_{s\rightarrow \infty}\int_{B_{R}(x)} 
\abs{\overrightarrow{H}^{\lambda(s)}} d\mu^{\lambda(s)}\leq 
 \int_{B_{R}(x)} \limsup_{s\rightarrow \infty} \abs{\overrightarrow{H}^{\lambda(s)}} d\mu^{\lambda(s)} =0.
\end{equation*}
Thus $\overrightarrow{H}_{\infty} =0$. Now, because $X_{\infty}$ is smooth, the weak mean curvature vector $\overrightarrow{H}_{\infty}$ coincides with 
the mean curvature vector in classical sense.  Thus we have a smooth solution $X_{\infty}$ that is a self-shrinker with $H = 0$ and therefore by 
\cite[Corollary 2.8]{CM}, it has to be a hyperplane. Furthermore
$\mu^{\infty}$ represents the surface measure of the plane $X_{\infty}$.  \\
By the Constancy theorem \cite[Theorem 41.1]{S}, $\theta$ is a constant. Thus by the convergence of Huisken's normalized integral (\ref{Huiskenconv}), we see that
\begin{equation*}
 \lim_{t\nearrow T} \int \rho_{y_{0}, T} d\mu_{t} = 
\lim_{s\nearrow \infty} \int \tilde{\rho}d\mu^{\lambda(s)} 
=\int \tilde{\rho} d\mu^{\infty}
= \int \tilde{\rho}\theta d\mathcal{H}^{2}\lfloor X_{\infty}
=\theta.
\end{equation*}
In the last equation, we have used that the Huisken's normalized integral of a plane is one.
By (\ref{below2}) and Proposition $2.10$ in \cite{White}, $1 \le \theta<2$. It follows from the integrality of $\theta$ that $\theta\equiv 1$. 
By White's regularity theorem \cite{White}, the second fundamental form $\abs{A}(\cdot, t)$ of $M_{t}$ is bounded as $t\rightarrow T$
and $(y_{0}, T)$ is a regular point. Thus, the 
flow can be extended 
past time $T$.\\
(b) Assume that (\ref{below2}) and (\ref{BL3d-sign}) hold. We adopt the notation from (a). Then we want to show that $(y_0,T)$ is a regular point. Assume as above, without losing any generality, that $y_0 = 0$. Rescale similarly and argue as in part (a)  to conclude that $\mu^{\infty}$ is the limit of the sequence of measures $\mu^{\lambda(s)}$ with connected supports $\tilde{M}_{s}$. Thus the 
support $X_{\infty}$ of $\mu^{\infty}$
is also connected. Because the mean curvature of $X_{\infty}$ is locally bounded, by Schatzle's constancy theorem (see, e.g., \cite[Theorem 3.1]{LS2}), we can conclude that
$\theta$ is a constant on $X_{\infty}$. 
Thus by the convergence of Huisken's normalized integral (\ref{Huiskenconv}), we see that
\begin{equation*}
 \lim_{t\nearrow T} \int \rho_{y_{0}, T} d\mu_{t} = 
\lim_{s\nearrow \infty} \int \tilde{\rho}d\mu^{\lambda(s)} 
=\int \tilde{\rho} d\mu^{\infty}
= \int \tilde{\rho}\theta d\mathcal{H}^{2}\lfloor X_{\infty}
\geq\theta.
\end{equation*}
Here we used the fact that
\begin{equation*}
 \int \tilde{\rho} d\mathcal{H}^{2}\lfloor X_{\infty}\geq 1
\end{equation*}
for any self-shrinker $X_{\infty}$. 
By (\ref{below2}) and Proposition $2.10$ in \cite{White},
\begin{equation*}
 1\leq \lim_{t\nearrow T} \int \rho_{y_{0}, T} d\mu_{t}<2.
\end{equation*}
It follows from the integrality of $\theta$ that $\theta\equiv 1$. Because the self-shrinker has multiplicity one, we must have the smooth convergence of $
\tilde{M}_{s}$ to $X_{\infty}.$ 
Note that the mean curvature 
$\tilde{H}_{s}$ of $\tilde{M}_{s}$ 
is $H_{t} (2 (T-t))^{1/2}$ where $H_{t}$ is the mean curvature of $M_{t}$ where $t = T - e^{-2s}$.
Thus, by (\ref{BL3d-sign}), we have
\begin{equation*}
 \limsup_{s\rightarrow\infty} \max_{\tilde{M}_{s}}\tilde{H}_{s} \leq 0.
\end{equation*}
It follows that $H_{\infty}\leq 0$ on $X_{\infty}$. By the classification result of Colding-Minicozzi \cite[Theorem 0.17]{CM}, $X_{\infty}$ must be a hyperplane.
Note that in Theorem 0.17 in \cite{CM}, no boundedness on the second fundamental form of $X_{\infty}$ is assumed. Now, we can conclude the proof
as in (a). 
\end{proof}
The method of the proof of Theorem \ref{3dBLR} also proves the following result:
\begin{cor}
 Let $M^{2}$ be a compact, smooth and embedded 2-dimensional manifold in $\RR^{3}$. If the Multiplicity One Conjecture of Ilmanen \cite[p. 7]{I1} holds then
 at the first singular time $T$ of the mean curvature flow, there exists 
$C_{\ast}>0$ such that
\begin{equation*}
 \limsup_{t\rightarrow T} \sqrt{T-t}~ \mathrm{max}_{M_{t}} ~H(\cdot, t)\geq C_{\ast}.
\end{equation*}
\end{cor}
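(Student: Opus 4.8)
The plan is to rerun the proof of Theorem~\ref{3dBLR}(b) essentially verbatim, the only change being that the Multiplicity One Conjecture is invoked in place of the Gaussian density hypothesis~(\ref{below2}) in order to pin down the multiplicity of the limiting self-shrinker. So I would argue by contradiction and suppose that~(\ref{BL3d-sign}) holds, i.e.\ $\limsup_{t\to T}\sqrt{T-t}\,\max_{M_t}H(\cdot,t)\le 0$; the goal is then to show that every point $y_0\in\RR^3$ reached by the flow at time $T$ is a regular point, contradicting that $T$ is the first singular time (and thereby showing $\limsup_{t\to T}\sqrt{T-t}\,\max_{M_t}H(\cdot,t)>0$, so that any $C_\ast$ below this positive number works).

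Fixing such a $y_0$ and assuming $y_0=0$, I would introduce the continuous rescaling~(\ref{contscale3d}) and form the rescaled hypersurfaces $\tilde M_s$ together with the rescaled Radon measures $\mu^{\lambda(s)}$. Since $M^2$ is compact and embedded in $\RR^3$, Huisken's monotonicity formula yields the uniform local area bound~(\ref{areabound}); combining this with the normalized monotonicity formula~(\ref{normalmono}) and Ilmanen's weak existence of blow-ups \cite[Lemma 8]{I1}, a subsequence of $\mu^{\lambda(s)}$ converges as Radon measures to a limit $\mu^\infty$ that is an integer $2$-rectifiable self-shrinker, and along which Huisken's normalized integral converges as in~(\ref{Huiskenconv}). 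By the $\RR^3$ Blow-up Theorem of Ilmanen \cite[p.~29]{I2} the support $X_\infty$ of $\mu^\infty$ is smooth, so $\mu^\infty=\theta\,\mathcal H^2\lfloor X_\infty$ for a smooth, complete, embedded self-shrinker $X_\infty$ — of polynomial volume growth, by~(\ref{areabound}) — and a positive integer-valued constant multiplicity $\theta$.

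This is exactly the step at which the new hypothesis enters. The Multiplicity One Conjecture of Ilmanen \cite[p.~7]{I1} asserts that a parabolic blow-up limit of a smooth surface flow in $\RR^3$ carries multiplicity one, so $\theta\equiv 1$. With $\theta\equiv 1$ and $X_\infty$ smooth, the varifold convergence $\tilde M_s\to X_\infty$ upgrades to smooth convergence on compact subsets (local regularity/unit regularity of the Brakke flow generated by the $\tilde M_s$ near a multiplicity-one smooth limit, cf.\ White \cite{White}). Consequently the pointwise bound on the rescaled mean curvature survives the limit: writing $\tilde H_s=\sqrt{2(T-t)}\,H_t$ with $t=T-e^{-2s}$, assumption~(\ref{BL3d-sign}) gives $\limsup_{s\to\infty}\max_{\tilde M_s}\tilde H_s\le 0$, hence $H_\infty\le 0$ on $X_\infty$; by the classification theorem of Colding--Minicozzi \cite[Theorem 0.17]{CM} — which assumes no bound on $|A|$ — $X_\infty$ is a hyperplane. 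Then $\int\tilde\rho\,d\mu^\infty=\theta=1$ by~(\ref{Huiskenconv}), so the Gaussian density of the flow at $(y_0,T)$ equals $1$, and White's regularity theorem \cite{White} makes $(y_0,T)$ a regular point, completing the contradiction.

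The main obstacle is really concentrated in the convergence upgrade in the third paragraph: getting genuine $C^\infty_{\mathrm{loc}}$ convergence $\tilde M_s\to X_\infty$ out of the multiplicity-one conclusion, which is what allows the \emph{pointwise} inequality $\max_{\tilde M_s}\tilde H_s\le o(1)$ to descend to $H_\infty\le 0$ (mere weak-$\ast$ convergence of the measures does not control the mean curvature or its sign). Beyond this, the argument adds nothing new over the proof of Theorem~\ref{3dBLR}(b); the price is that the whole statement is conditional on Ilmanen's conjecture, which is used only to obtain $\theta\equiv 1$ and after which no density hypothesis such as~(\ref{below2}) is needed.
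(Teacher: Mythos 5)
Your proposal is correct and is essentially the paper's own argument: the paper proves this corollary simply by remarking that "the method of the proof of Theorem \ref{3dBLR} also proves" it, i.e.\ one reruns the proof of part (b) with the Multiplicity One Conjecture supplying $\theta\equiv 1$ in place of the density hypothesis (\ref{below2}), exactly as you do. Your identification of the key point — that multiplicity one is what upgrades the measure convergence to smooth convergence so the sign condition on $\tilde H_s$ passes to the limit, after which Colding--Minicozzi and White's regularity theorem finish the contradiction — matches the paper's reasoning.
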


\section{Singular sets}
\label{singsets}
In this section, we will prove Theorems \ref{thm-all-sing-sets1}, \ref{thm-all-sing-sets} and Corollary \ref{zero-vol}. 
We will be still dealing with the type I mean curvature flow, defined by (\ref{typeI}), such that
$$\limsup_{t\to T}(\max_{M_t} |A|^2(p,t) ) = +\infty.$$

In this section our goal is to extend Stone's theorem in \cite{St} about the characterization of 
singular sets of the mean curvature flow to any type I mean curvature flow (without requiring $H \ge 0$ as in \cite{St}). This will 
tell us that at every singular point of the type I mean curvature flow the second fundamental form and the mean curvature have to blow 
up at the rate $(T-t)^{-\frac{1}{2}}$. Note that the analogous characterization of singular sets for the type I Ricci flow 
has been recently obtained in \cite{EMT}.  In \cite{EMT} one of the main tools in proving this characterization was 
Perelman's pseudolocality theorem \cite[Theorem 10.3]{Pe}. In \cite{BLC} the pseudolocality theorem for the mean curvature has been proved 
which motivated us to prove Theorem \ref{thm-all-sing-sets1}, that is, the following:
\begin{theorem}
\label{thm-char-set}
Assume (\ref{typeI}) for the mean curvature flow (\ref{MCF1}). Then $\Sigma_H = \Sigma$.
\end{theorem}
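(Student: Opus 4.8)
The plan is to prove the two inclusions $\Sigma_H\subset\Sigma$ and $\Sigma\subset\Sigma_H$ separately. The first is essentially trivial: by the definition given just above the statement, $\Sigma_H\subset\Sigma_A\subset\Sigma_s\subset\Sigma_g\subset\Sigma$, so only the reverse inclusion $\Sigma\subset\Sigma_H$ requires work. Let $p\in\Sigma$, and set $y_0 = F(p,T):=\lim_{t\to T}F(p,t)$ (this limit exists by the Type I bound, as in the estimate $|F(p,t)-F(p,t')|\le C\sqrt{T-t}$ used in the proof of Theorem \ref{BLR}). We must show that $|H|(F(p,t),t)$ blows up at the Type I rate along \emph{all} $t\to T$, i.e. there is $\delta>0$ with $|H|^2(F(p,t),t)\ge\delta/(T-t)$ for all $t$ sufficiently close to $T$.

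First I would run the continuous rescaling of Huisken centered at $y_0$, exactly as in the proof of Theorem \ref{mainthm}: set $\tilde F(q,s)=(2(T-t))^{-1/2}(F(q,t)-y_0)$, $s=-\tfrac12\log(T-t)$. By (\ref{typeI}) and the smoothness estimates (\ref{regest}), for any sequence $s_j\to\infty$ a subsequence of $\tilde M_{s_j}$ converges smoothly (with multiplicity, but here in the smooth pointed sense once we track $\tilde F(p,s_j)$) to a smooth complete self-shrinker $\tilde M_\infty$ satisfying (\ref{selfsim}) with bounded second fundamental form. The point $\tilde F(p,s_j)$ subconverges to some $x_\infty\in\tilde M_\infty$. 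Since $p\in\Sigma$ — indeed one can arrange $p\in\Sigma_g$, so there are $p_i\to p$, $t_i\to T$ with $|A|^2(F(p_i,t_i),t_i)\ge\delta_0/(T-t_i)$ — the limit self-shrinker $\tilde M_\infty$ is \emph{not} a hyperplane: if it were, White's local regularity theorem \cite{White} would give a parabolic neighborhood of $(y_0,T)$ in which $|A|$ stays bounded, contradicting $p\in\Sigma$. By Colding–Minicozzi \cite[Theorem 0.17]{CM} (the version with no curvature bound), or by Huisken \cite[Theorem 5.1]{Huisken93} since here $|\tilde A_\infty|$ is bounded, a nonflat self-shrinker cannot have $\tilde H_\infty\le 0$ everywhere; in fact \cite[Theorem 0.17]{CM} asserts a nonflat self-shrinker has $\tilde H_\infty$ positive somewhere. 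Combined with the elliptic equation $L\tilde H_\infty=\tilde H_\infty$ and the Harnack/strong maximum principle argument reproduced in the proof of Theorem \ref{mainthm}, $\tilde H_\infty$ cannot vanish identically; so $\tilde H_\infty$ is somewhere positive. The crux, however, is not the value of $\tilde H$ at \emph{some} point of $\tilde M_\infty$ but at the \emph{tracked} point $x_\infty$, and moreover for \emph{every} subsequential limit. So the main work is to upgrade this to a pointwise-in-$p$ statement.

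This is where I expect the genuine obstacle to lie, and I would resolve it using the pseudolocality theorem for mean curvature flow of \cite{BLC}, in the spirit of \cite{EMT}. The argument is: suppose for contradiction that $|H|^2(F(p,t_k),t_k)(T-t_k)\to 0$ along some $t_k\to T$. Rescale at $y_0$ along $s_k=-\tfrac12\log(T-t_k)$; pass to a smooth limit self-shrinker $\tilde M_\infty$ with a marked point $x_\infty=\lim\tilde F(p,s_k)$, at which, by the $H$-hypothesis, $\tilde H_\infty(x_\infty)=0$. Now I claim $\tilde H_\infty\equiv 0$: this follows because $\tilde H_\infty$ solves $L\tilde H_\infty=\tilde H_\infty$ on the complete self-shrinker, and one can apply the strong maximum principle / Harnack inequality argument from the proof of Theorem \ref{mainthm} — but applied to the function $\tilde H_\infty$ itself one must know it is signed, which we do not a priori. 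The cleaner route, and the one I would actually take, is pseudolocality: the pointwise smallness $|H|^2(F(p,t),t)=o(1/(T-t))$ propagates, via the local curvature estimates coming from \cite{BLC} (local control of $|A|$ from local control of the flow near a nearly-flat point), to local boundedness of $|A|$ near $F(p,t)$ as $t\to T$, which places $p$ outside $\Sigma$ — a contradiction. Concretely: pointwise $H=o((T-t)^{-1/2})$ at $F(p,t)$ forces the rescaled flows near $\tilde F(p,s)$ to be, on a fixed parabolic ball, arbitrarily close in the flat-norm / Gaussian-density sense to a static hyperplane (using Huisken monotonicity at $(y_0,T)$ together with the density lower bound $1$), and then pseudolocality converts this into a uniform interior estimate $|A|\le C$ on a smaller ball, persisting up to $t=T$. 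Hence $p\notin\Sigma$, the desired contradiction. Thus $\Sigma\subset\Sigma_H$, and with the trivial inclusion we conclude $\Sigma_H=\Sigma$.

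I would organize the writeup as: (1) recall $\Sigma_H\subset\Sigma$; (2) fix $p\in\Sigma$, set $y_0=F(p,T)$, and set up the continuous rescaling centered at $y_0$ with the marked point $\tilde F(p,s)$; (3) argue that no subsequential limit is a hyperplane (White's theorem), hence every limit self-shrinker is nonflat with $\tilde H_\infty$ somewhere positive (\cite[Theorem 0.17]{CM}); (4) assuming toward a contradiction that $|H|^2(T-t)\to 0$ at $F(p,t)$ along some sequence, invoke pseudolocality \cite{BLC} together with Huisken's monotonicity and the density gap to deduce a uniform local $|A|$-bound near $F(p,\cdot)$ up to time $T$, contradicting $p\in\Sigma$; (5) conclude. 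The step I expect to be delicate — and which deserves the most care in the final version — is step (4), matching the hypotheses of the pseudolocality statement in \cite{BLC} (initial local graphicality / smallness over a ball of definite size) to what the rescaled flow actually provides near the marked point, uniformly as $s\to\infty$.
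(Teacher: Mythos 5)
Your overall architecture matches the paper's: assume $p\in\Sigma\setminus\Sigma_H$, rescale at the tracked point, and use the pseudolocality theorem of \cite{BLC} (Theorem \ref{pseudoMCF}) to get a uniform local bound on $|A|$ up to time $T$, contradicting $p\in\Sigma$. But there is a genuine gap exactly at the step you yourself flag as delicate: your proposed mechanism for verifying the hypotheses of pseudolocality does not work. You argue that pointwise smallness $H=o((T-t)^{-1/2})$ at $F(p,t)$, ``using Huisken monotonicity at $(y_0,T)$ together with the density lower bound $1$,'' forces the rescaled flows near the marked point to be close to a static hyperplane on a fixed parabolic ball. This is unjustified and in general false: since $p\in\Sigma$, White's theorem gives that the Gaussian density at $(y_0,T)$ is bounded \emph{away} from $1$, and (as you note in your own step (3)) the limit self-shrinker is not flat, so no global or fixed-scale closeness to a hyperplane is available. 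Smallness of $H$ at a single marked point carries no density information by itself, so the flat-norm/density route collapses precisely where the work has to be done. Your alternative fallback ($\tilde H_\infty\equiv 0$ via Harnack) is also correctly discarded by you, since $\tilde H_\infty$ has no sign a priori; and the ``nonflat, $H$ somewhere positive'' observation in step (3) never feeds into the contradiction.

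What the paper does instead is purely local at the marked point, and this is the missing idea. The blow-up is centered so that the tracked point sits at the spatial origin for all rescaled times; then on the limit self-shrinker $H_\infty(0,s)=0$ for all $s<0$ by self-similarity, and — this is the key computation (Lemma \ref{lem-nabla-H}) — differentiating $H_\infty=\langle F_\infty,\nu_\infty\rangle/(-2s)$ and using $F_\infty(0,s)=0$ gives $\nabla H_\infty(0,s)=0$ as well. This yields $|\langle F_\infty,\nu_\infty\rangle|\le\tilde\e\,|F_\infty|$ on small balls $B(0,r)$, $r\le r_0(\tilde\e)$, for every slice $s\in[-1,0)$ (Claim \ref{main-claim}); combined with Lemma 7.1 of \cite{BLC} and a connectedness argument ruling out extra sheets in the ball, the limit is a local $\delta/2$-Lipschitz graph of a definite radius $\tilde r_0$ at the origin at \emph{every} slice, not just near flatness at some unspecified small scale. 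One then chooses the slice $s=-(\e\tilde r_0)^2$ so that the pseudolocality time of existence reaches the (rescaled) singular time $0$, transfers the graphicality to $M_j$ for $j$ large by smooth convergence, and unscales to get $|A|$ bounded near $p$ up to $T$. Note also that the centering matters: with your centering at $y_0=\lim F(p,t)$ and a marked point $x_\infty$ that need not be the origin, the cancellation producing $\nabla H_\infty=0$ is lost. So while your outline names the right tool and the right contradiction, the actual bridge from ``$H$ small at the tracked point along a sequence'' to ``local Lipschitz graph of definite radius at the right time slice'' is absent, and the density-based substitute you propose would fail.
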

In the case of mean convex mean curvature flow, we have a stronger result, that is Theorem \ref{thm-all-sing-sets}. The proof of this theorem is simple so we
give it here first.
\begin{proof}[Proof of Theorem \ref{thm-all-sing-sets}]
Due to  the inclusions
$$\Sigma^{\delta}_{H}\subset\Sigma_H \subset \Sigma_A \subset \Sigma_s \subset \Sigma_g \subset \Sigma,$$ 
it is enough to show that $\Sigma \subset \Sigma^{\delta}_H$.  Let $p\in \Sigma \backslash \Sigma^{\delta}_H$, meaning that there exists a sequence $t_i\to T$ so that
\begin{equation}
\label{eq-eps-i}
H(F(p,t_i),t_i)\le \frac{1}{\sqrt{(2+\delta)(T-t_i)}}.
\end{equation}
Without loss of 
generality, assume that $F(p, t_{i})\rightarrow 0$.
Then, using the blow-up argument as in the proof of Theorem \ref{BLR}, we get in the limit a smooth 
self-shrinker with $\tilde{H}_{\infty} (0)\leq\sqrt {\frac{2}{2 +\delta}}.$
Under the mean convexity assumption and the smoothness of the limit blow-up hypersurface, we know from Huisken's classification \cite{Huisken93} that the self-shrinker
must be $\SS^{n-m} (\sqrt{n-m})\times R^{m}$ ($0\leq m\leq n$). The mean curvature of these surfaces is  $\sqrt{n-m}$.
Thus the inequality $\tilde{H}_{\infty} (0)\leq \sqrt{\frac{2}{2 +\delta}}$ forces $\tilde{M}_{\infty}$ to be $\RR^{n}$. This implies that any
limit blow-up hypersurface at $0$ must be a hyperplane. Its Gaussian density is one and by White's regularity theorem \cite{White} the norm of the second 
fundamental form $|A|(\cdot,t)$ has to be uniformly bounded in a neighborhood of $p$ as $t\to T$.  This means $p \notin \Sigma$ and we obtain a
contradiction. Therefore, $\Sigma^{\delta}_H = \Sigma$.
\end{proof}
\h Before we start proving Theorem \ref{thm-char-set}, we recall the definition of local $\delta$-Lipschitz graph of 
radius $r_0$ and state the pseudolocality theorem from \cite{BLC}.  
\begin{definition}
An $n$-dimensional submanifold $M \subset \tilde{M}$ is said to be a local $\delta$-Lipschitz graph of 
radius $r_0$ at $p\in M$, if there is a normal coordinate system $(y_1, \dots, y_m)$ of $\tilde{M}$ 
around $p$ with $T_pM = span\{(\frac{\partial}{\partial y_1}, \dots, \frac{\partial}{\partial y_m}\}$, a vector 
valued function $F: \{y' = (y_1, \dots, y_m)  |  (y_1^2 + \dots y_m^2 < r_0^2\} \to \mathbb{R}^{m-n}$, with $F(0) = 0$, $|DF|(0) = 0$ such 
that $M\cap\{|y'| < r_0\} = \{(y',F(y'))  |  |y'| < r_0\}$ and $|DF|^2(y') = \sum_{i,\beta} \left(\frac{\partial F^{\beta}}{\partial y_i}\right )^2 < \delta^2$.
\end{definition}

\begin{theorem}[{\bf Chen, Yin \cite[Theorem 1.4]{BLC}}]
For every $\alpha > 0$ there exist $\e > 0$ and $\delta > 0$ with the following property. Suppose we have a 
smooth solution to the mean curvature flow $M_t\subset \mathbb{R}^n$ properly embedded in $B(x_0,r_0)$ for $t\in [0,T]$ with $0 < T \le (\e r_0)^2$. Assume that at time zero, $M_0$ is a local $\delta$-Lipschitz graph of radius $r_0$ at $x_0\in M_0$. Then we have an estimate of the second fundamental form,
$$|A|^2(x,t) \le \frac{\alpha}{t} + \frac{1}{(\e r_0)^2},$$
on $B(x_0,\e r_0)\cap M_t$, for any $t\in (0,T)$.
\label{pseudoMCF}
\end{theorem}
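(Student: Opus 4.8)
The plan is to adapt Perelman's proof of pseudolocality for the Ricci flow \cite{Pe}, with Perelman's reduced-volume monotonicity replaced by Huisken's monotonicity formula \cite{Huisken90} and with White's local regularity theorem \cite{White} providing the concluding rigidity. By the parabolic scaling invariance of the mean curvature flow one may assume $x_0 = 0$ and $r_0 = 1$, so it suffices to prove: for every $\alpha > 0$ there are $\e,\delta > 0$ so that whenever $M_t$ is a smooth mean curvature flow properly embedded in $B(0,1)$ for $t\in[0,T]$ with $T \le \e^2$, and $M_0$ is a local $\delta$-Lipschitz graph of radius $1$ at $0$, then $\abs{A}^2(x,t) \le \alpha/t + \e^{-2}$ on $B(0,\e)\cap M_t$. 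Suppose this fails. Then there are $\alpha_0 > 0$, sequences $\e_i \downarrow 0$ and $\delta_i \downarrow 0$, mean curvature flows $M^i_t$ properly embedded in $B(0,1)$ for $t\in[0,T_i]$ with $T_i \le \e_i^2$ and $M^i_0$ a $\delta_i$-Lipschitz graph of radius $1$ at $0$, and points $x_i \in B(0,\e_i)\cap M^i_{t_i}$ with $t_i\in(0,T_i]$ and $\abs{A}^2(x_i,t_i) > \alpha_0/t_i + \e_i^{-2}$.

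Next I would perform a Perelman-type point selection inside, say, $B(0,1/2)\times(0,T_i]$: maximize a scale-invariant weight built from $\abs{A}^2$, the elapsed time, and the distance to the lateral boundary $\partial B(0,1)$, so as to obtain points $(\bar x_i,\bar t_i)$ with curvature $Q_i := \abs{A}^2(\bar x_i,\bar t_i) \ge \alpha_0/\bar t_i$, on whose backward parabolic cylinder $P_i := B(\bar x_i, A_i Q_i^{-1/2}) \times (\bar t_i - A_i Q_i^{-1}, \bar t_i]$ one has $\abs{A}^2 \le 4Q_i$, with $A_i\to\infty$. The defining inequality, together with $t_i \le \e_i^2 \to 0$ and $x_i\to 0$, should force $Q_i\bar t_i \to \infty$, $Q_i^{1/2}\bar x_i$ bounded, and $Q_i^{1/2}\,\dist(\bar x_i,\partial B(0,1)) \to \infty$. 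This is the step I expect to be the main obstacle: the weight and the region must be calibrated so that the rescaled flows exist on time intervals exhausting $(-\infty,0]$, carry uniformly bounded curvature on exhausting spatial balls, and still retain quantitative information about the near-flat initial data through Huisken's monotonicity.

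Then set $\widetilde M^i_\tau := Q_i^{1/2}\bigl(M^i_{\bar t_i + \tau Q_i^{-1}} - \bar x_i\bigr)$ and extract a limit. By the point selection, $\abs{\widetilde A^i}^2 \le 4$ on a parabolic region exhausting $\mathbb{R}^n\times(-\infty,0]$; the interior derivative estimates of Ecker--Huisken \cite{EH} then bound all covariant derivatives of $\widetilde A^i$ there, so a subsequence converges in $C^\infty_{\mathrm{loc}}$ to a complete, properly embedded, smooth ancient mean curvature flow $\widetilde M^\infty_\tau$, $\tau\in(-\infty,0]$, with bounded second fundamental form and $\abs{\widetilde A^\infty}^2(0,0) = 1$. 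It remains to show that $\widetilde M^\infty$ is a hyperplane, which contradicts $\abs{\widetilde A^\infty}^2(0,0) = 1$.

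For the flatness I would argue through Gaussian densities. Since $M^i_0$ is a $\delta_i$-Lipschitz graph of radius $1$ at $0$, its Gaussian area ratios over balls of radius $\le 1$ centered near $0$ are $\le 1 + \omega(\delta_i)$ with $\omega(\delta_i)\to 0$; moreover the Lipschitz condition gives uniform Euclidean area ratio bounds for $M^i_0$, which propagate to all $M^i_t$ via Huisken's monotonicity (as in \cite[Lemma 2.9]{CM}). A localized version of Huisken's monotonicity formula --- Huisken's weighted area with the Gaussian kernel multiplied by a cutoff supported in $B(0,1)$, whose extra terms are controlled by the area bound and by $T_i\to 0$ --- then shows that the parabolic Gaussian density ratios of $M^i_t$, at the base points and scales relevant to the rescaling, are also $\le 1 + o(1)$. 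Passing to the limit, every Gaussian density ratio of $\widetilde M^\infty$ equals $1$, the reverse inequality being automatic for a smooth properly embedded limit. By the equality case of Huisken's monotonicity formula $\widetilde M^\infty$ is then a self-shrinker of Gaussian density one, hence a hyperplane by Brakke's theorem \cite{Brakke} (equivalently by \cite[Corollary 2.8]{CM}); so $\abs{\widetilde A^\infty}^2\equiv 0$, a contradiction. Unwinding the scaling then yields $\e$ and $\delta$ depending only on $\alpha$ with the asserted property.
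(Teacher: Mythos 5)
First, a point of comparison: the paper does not prove this statement at all --- it is quoted verbatim as Theorem 1.4 of Chen--Yin \cite{BLC} and used as a black box in the proof of Theorem \ref{thm-char-set}. So there is no in-paper argument to measure your proposal against; I can only assess your sketch as a proof of the cited result. Your overall circle of ideas (argue by contradiction, Perelman-type point selection, parabolic rescaling with Ecker--Huisken interior estimates, and a flatness conclusion via localized Huisken monotonicity together with White/Brakke) is the right one and is close in spirit to the original argument of Chen--Yin, so the route is reasonable.

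However, as written there is a genuine gap, and it sits exactly at the step you flag. From the contradiction hypothesis $|A|^2(x_i,t_i)>\alpha_0/t_i+\varepsilon_i^{-2}$, any Perelman-style selection yields points with $Q_i\bar t_i\ge \alpha_0$ and $Q_i^{1/2}\,\mathrm{dist}(\bar x_i,\partial B(0,1))\to\infty$, but nothing forces $Q_i\bar t_i\to\infty$: the backward time available after rescaling is only of order $Q_i\bar t_i$, so the limit flow need not be ancient, and you cannot assert that ``every Gaussian density ratio of the limit equals $1$'' --- the almost-flat initial data is only seen by the monotonicity formula up to rescaled scale about $\sqrt{Q_i\bar t_i}$, which may stay bounded. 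The argument can be closed, but with a different ending: using a localized monotonicity started at $t=0$ (where the $\delta_i$-Lipschitz graph at scale $\sqrt{\bar t_i}\le\varepsilon_i$ has Gaussian ratio $1+o(1)$), the density ratios of the rescaled flows centered at the selected points are $\le 1+o(1)$ for all scales up to $\sqrt{\alpha_0}$; in the limit they are $\ge 1$ at every scale by smoothness at the center plus monotonicity, hence identically $1$ on a fixed range of scales, which by the equality case of the monotonicity formula forces the limit to be a shrinker of density one there, hence a plane, contradicting $|A^{\infty}|(0,0)=1$. No ancientness is needed. One more repair: your appeal to \cite[Lemma 2.9]{CM} to propagate area bounds is not available here, since the flow is defined only inside $B(0,1)$ and is not closed; instead one should use the localized monotonicity with Ecker's spherically shrinking cutoff $\bigl(1-|x-y|^2-2n(t-\tau)\bigr)_+^3$, which is a subsolution along the flow and requires no area input, the cutoff being compactly supported where the flow is properly embedded. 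With the point-selection lemma actually proved and these adjustments, your strategy would go through; as it stands, the key quantitative claims ($Q_i\bar t_i\to\infty$, ancient limit, density one at all scales) are unsupported.
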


\begin{proof}[Proof of Theorem \ref{thm-char-set}]
We have that $\Sigma_H \subset \Sigma$. Assume $p\in \Sigma\backslash \Sigma_H$. Let $t_i\in [T-c_i,T)$ be such that
\begin{equation}|H|(p,t_i) \le \frac{\epsilon_i}{\sqrt{T-t_i}},
 \label{Hsmall}
\end{equation}
with $\epsilon_i \to 0$ and $\lambda _i = (T - t_i)^{-1/2} \to 
\infty$ as $i\to\infty$. Consider the rescaled  sequence $F_i(\cdot,t) = \lambda_i (F(\cdot, T + \frac{t}{\lambda_i^2}) - p)$. It has the property 
that $|A|^2_i(\cdot,t) = \frac{|A|^2(T+\frac{t}{\lambda_i^2})}{\lambda^2_i} \le \frac{C}{(-t)}$, due to condition (\ref{typeI}) and also  $\lim_{i\to\infty} |H_i|(0,-1) = 0$.
Due to Huisken's monotonicity formula and the smoothness estimates \cite{EH}, we can let $i\rightarrow\infty$ and get 
that the limiting hypersurface $M_{\infty}^t$ is a self-shrinker,
i.e, $M^{s}_{\infty} = \sqrt{-s}M^{-1}_{\infty}$ for all $s<0$, with 
$$|H_{\infty}|(0,-1) = 0 \,\,\, \mbox{and} \,\,\,  H_{\infty}(\cdot,s) = \frac{\langle F_{\infty}, \mu_{\infty}\rangle}{(-2s)}.$$

\begin{lemma}
\label{lem-nabla-H}
For all $s<0$, we have 
\begin{equation*}
H_{\infty}(0, s) = 0,~\nabla H_{\infty}(0, s) = 0,
\end{equation*}
where $\nabla$ is the Euclidean derivative.
\end{lemma}

\begin{proof}
At every point on the surface $M^{s}_{\infty}$, there is an orthonormal 
frame consisting of the outward unit vector $\nu$ and vectors $\{e_i^{\infty}\}_{1\le i \le n}$, lying in a tangential plane to the 
hypersurface at the point.
Recall that $|H_{\infty}|(0,-1) = 0$. If we differentiate $H_{\infty}(\cdot, s) = \frac{\langle F_{\infty}, \mu_{\infty}\rangle}{(-2s)}$ at $0$, in the 
tangential directions, we obtain
\begin{eqnarray*}
(-2s)\nabla_i H_{\infty} &=& \langle \nabla_i F_{\infty}, \nu_{\infty}\rangle + \langle F_{\infty}, \nabla_i\nu_{\infty}\rangle \\
&=& \langle e^{\infty}_i, \nu_{\infty}\rangle + \langle F_{\infty},\nabla_i\nu_{\infty}\rangle = 0,
\end{eqnarray*}
since $\langle \nu_{\infty}, e_i^{\infty}\rangle = 0$ and $F_{\infty}(0,s) = \sqrt{-s}F(0,-1) = 0$  (recall that $F_{\infty}(0,-1) = 0$, because 
$F_{\infty}(0,-1)$ is the position vector of the origin at time $-1$). This implies 
\begin{equation}
\label{eq-tang-der}
\nabla_i H_{\infty}(0,s) = 0,
\end{equation}
where $\nabla_i$ are the tangential derivatives to the limiting hypersurface at the origin. We claim that
$\nabla_{\nu} H_{\infty}(0,s) = 0$, where $\nabla_{\nu}$ is the derivative in the normal direction to the hypersurface. 
At the origin, we have
$$2(-s) \nabla_{\nu} H_{\infty} = \langle \nabla_{\nu}F_{\infty}, \nu_{\infty}\rangle + \langle F_{\infty}, \nabla_{\nu_{\infty}}\nu_{\infty}\rangle = \langle \nabla_{\nu}F_{\infty}, \nu_{\infty}\rangle,$$
since $F_{\infty}(0,s) = 0$, for $s < 0$. The hypersurface $M^{s}_{\infty}$ at the origin can be locally written as a graph
$F_{\infty}(x_1, \dots, x_n) = (x_1, \dots, x_n, u(x_1, \dots, x_n))$, with
$$\nu_{\infty}(x_1, \dots, x_n) = \frac{1}{\sqrt{1+|\nabla u|^2}}\cdot (\frac{\partial u}{\partial x_1}, \dots, \frac{\partial u}{\partial x_n}, -1).$$
Then,
\begin{eqnarray*}
\langle \nabla_{\nu_{\infty}}F_{\infty}, \nu_{\infty}\rangle &=& \langle (\nabla_{\nu_{\infty}} x_1, \dots, \nabla_{\nu_{\infty}} x_n, \nabla_{\nu_{\infty}}u), \nu_{\infty}\rangle \\
&=& \frac{1}{\sqrt{|\nabla u|^2 + 1}}\cdot \langle \left(\frac{\partial u}{\partial x_1}, \dots, \frac{\partial u}{\partial x_n}, \left(\frac{\partial u}{\partial x_1}\right)^2 + \dots \left(\frac{\partial u}{\partial x_n}\right)^2\right), \nu_{\infty}\rangle \\
&=& 0.
\end{eqnarray*} 
This implies 
\begin{equation}
\label{eq-norm-der}
\nabla_{\nu_{\infty}} H_{\infty}(0,s) = 0.
\end{equation}
Relations (\ref{eq-tang-der}) and (\ref{eq-norm-der}) conclude the proof of the Lemma.
\end{proof}

\begin{claim}
For every $\tilde{\e} > 0$ there exists an $r_0$ so that
$$|\langle F_{\infty}, \nu_{\infty}|(x,s) \le \tilde{\e}\cdot |F_{\infty}(x,s)|,$$
for every $x\in B(0,r)\cap M_{\infty}^s$, every $r \le r_0$ and $-1\leq s < 0$.
\label{main-claim}
\end{claim}
\begin{proof}
Let $\tilde{\e} > 0$ be a small number and let $r_0 = r_0(\tilde{\e})$ so 
that $|\nabla H_{\infty}(\cdot,-1)| < \tilde{\e}/2$ in $B(0,2r_0)\cap M^{-1}_{\infty}$. Here we have used Lemma \ref{lem-nabla-H} and as in there,
$\nabla$ is the Euclidean derivative. We find
\begin{equation}
\label{eq-intermed}
|H_{\infty}|(x,-1) \le |H_{\infty}|(0,-1) + \frac{\tilde{\e}}{2}\cdot \dist(0,x) \le \frac{\tilde{\e}}{2} r,
\end{equation}
for every $x\in B(0,r)\cap M_{\infty}^{-1}$ and $r \le r_0$,  where $\dist$ is the Euclidean distance.
Since on a self shrinker $H_{\infty}(\cdot,s) = \frac{H_{\infty}(\cdot,-1)}{\sqrt{-s}}$,  (\ref{eq-intermed}) yields to
\begin{equation}
\label{eq-s}
|H_{\infty}|(x,s) \le \frac{\tilde{\e} r}{2\sqrt{-s}}, ~x\in B(0,r)\cap M_{\infty}^s.
\end{equation}
Combining $H_{\infty}(\cdot,s) = \frac{\langle F_{\infty}, \nu_{\infty}\rangle(\cdot,s)}{(-2s)}$ with (\ref{eq-s}), we find 
$$|\langle F_{\infty}, \nu_{\infty}\rangle(\cdot,s)| \leq \sqrt{-s}\tilde{\e} r\leq \tilde{\e} r,$$
in $B(0,r)\cap M_{\infty}^s$, for every $r \le r_0$ and every $-1 \le s < 0$, which implies the Claim.
\end{proof} 
\h Fix any time slice $s\in [-1,0)$. For ease of notation, we suppress the superscript $s$ in $M^{s}_{\infty}$ when no confusion arises. By 
Lemma 7.1 in \cite{BLC} we have that the connected component of $B(0, r_{0})\cap M_{\infty}$ containing the origin can be written as a graph $\{(x,h(x')  |   |x'| < \frac{r_0}{96}\}$ and that
$$|D h|(x') \le \frac{36}{r_0}|x'|, \,\,\, x'\in B(0,\frac{r_0}{96}).$$
By Claim \ref{main-claim}, we have that 
\begin{equation}
\label{eq-layer}
|\langle F_{\infty}, \nu_{\infty}\rangle| < \tilde{\e} r, \,\,\, \mbox{in} \,\,\, B(0,r)\cap M_{\infty},
\end{equation}
for every $r \le r_0$. Since $\tilde{\e} < 1$ is a very small constant, we can conclude that our self-shrinker is very close 
to being a hyperplane around the origin, in $B(0,r_0)\cap M_{\infty}$. Moreover estimate (\ref{eq-layer}) forces that there is exactly 
one component of $B(0,\frac{r_0}{96})\cap  M_{\infty}$ in $B(0,\frac{r_0}{96})$.  To see that we can argue as follows. If there 
existed another component, call it $\gamma$, it would have to intersect  $\partial B(0,\frac{r_0}{96})$ in two points. Our condition (\ref{eq-layer}) would 
imply that for every $x\in B(0,\frac{r_0}{96})\cap M_{\infty}$ we have
$$|\langle F_{\infty},\nu_{\infty}\rangle(x)| < \tilde{\e} |F_{\infty}|(x).$$
This means at both intersection points the position vector $F_{\infty}$ is almost tangential to the hypersurface. Furthermore, since $\nu$ always stays the outward unit normal vector,
this condition would also force that, at one intersecting point we have an angle between the position vector $F_{\infty}$ and the normal 
vector $\nu_{\infty}$ measured in the counterclockwise direction being $\frac{\pi}{2} \stackrel{+}{-} \alpha$; and at the other 
intersection point, that angle would have to be either $\frac{3\pi}{2}\stackrel{+}{-}\beta$  or $-\frac{\pi}{2} \stackrel{+}{-}\beta$, for some 
small positive numbers $\alpha, \beta$. Since the angle between $F_{\infty}$ and $\nu_{\infty}$ is changing continuously along $\gamma$, there would exist a point  $q\in \gamma$ at which the position vector $F_{\infty}$ and the normal vector $\nu_{\infty}$ are collinear. This would imply
$$|\langle F_{\infty}, \nu_{\infty}\rangle(q)| = |F_{\infty}|(q),$$
which contradicts (\ref{eq-layer}).

Therefore it follows that
$$M_{\infty}\cap B(p,\frac{r_0}{96}) = \{(x',h(x'))   |   |x'| < \frac{r_0}{96}\},$$
with
\begin{equation}
\label{eq-close}
|Dh(x')| \le \frac{36}{r_0}|x'|, \,\,\, x'\in B(0,\frac{r_0}{96}).
\end{equation}

Let $\delta > 0$ and $\e > 0$ be as in the pseudolocality theorem \ref{pseudoMCF} for the mean curvature 
flow (Theorem 1.4 in \cite{BLC}).  Let $\tilde{r}_0 = \min\{\frac{r_0}{96}, \frac{r_0 \cdot \delta}{144}\}$. We still have 
that $\langle F_{\infty}, \nu_{\infty}\rangle| < \tilde{\e} |F_{\infty}|$ for $x\in B(0,\tilde{r}_0)$ and the same arguments as in the previous paragraph yield to $B(0,\tilde{r}_0)\cap M_{\infty}$ having only one component in $B(0,\tilde{r}_0)$ that is graphical, that is,
$$B(0,\tilde{r}_0)\cap M_{\infty} = \{(x,h(x') | |x'| < \tilde{r}_0\}.$$
Using (\ref{eq-close}) we find
$$|Dh(x')| \le \frac{36}{r_0}\cdot |x'|, \,\,\, x'\in B(p,\tilde{r}_0),$$
that is
$$|Dh(x')| \le \frac{36}{r_0}\cdot\tilde{r}_0 < \frac{\delta}{2}.$$
This means $M_{\infty}$ is a local $\delta/2$-Lipschitz graph of radius $\tilde{r}_0$  at the origin. \\
\h Now, taking $s\in [-1,0)$ into consideration and inspecting the above argument, we see 
that $M^{s}_{\infty}$ is a local $\delta/2$-Lipschitz graph of radius $\tilde{r}_0$  at the origin for all $s\in [-1,0)$, which follows from 
Claim \ref{main-claim}. In our application, we can just take $s= -(\epsilon \tilde{r}_{0})^2.$
Because of the smooth convergence of $M_{j}(\cdot, s)$ to $M_{\infty}^{s}$, by taking  $j \ge j_0$ large enough, the 
rescaled hypersurface $M_j\cap B(p,\tilde{r}_0)$ is a local $\delta$- Lipschitz 
graph of radius $\tilde{r}_0$ at $p$. Let $Q := \lambda^2_{j_0}$. Then by the pseudolocality theorem \ref{pseudoMCF}
applied to the mean curvature flow with rescaled initial hypersurface $M_{j_{0}}$, we can conclude that
\begin{equation}
|A_{j_0}|^2(x,\tau) \le \frac{\alpha}{\tau +\e^2\tilde{r}_0^2} + 
\frac{1}{(\e \tilde{r}_0)^2}, \,\,\,  \tau\in (-(\e \tilde{r}_0)^2,0), \,\,\, x\in \left(M_{j_0}\right)_{\tau}\cap B(p,\tilde{r}_0\e).
\label{pseudo2}
\end{equation}
Here, with a little abuse of notation, we have denoted by $(M^{n})_{\tau}= F_{j_0}(\cdot,\tau)(M^n)$. From our rescaling, we see that (\ref{pseudo2}) is equivalent to
$$|A(x,t)|^2 \le Q(\frac{1}{(t-T)Q + (\e\tilde{r}_0)^2} + \frac{1}{(\e \tilde{r}_0)^2}), \,\,\, \mbox{for all} \,\,\, t\in (T - \frac{\e^2\tilde{r}_0^2}{Q},T),$$
on the neighborhood $B(p,\frac{\e\cdot\tilde{r}_0}{Q})\cap M_t$. The bound for times $t < T - \frac{(\e\tilde{r}_0)^2}{Q}$ follows 
immediately from the type I condition (\ref{typeI}). This implies $p \notin \Sigma$ and we obtain contradiction. This concludes 
that $\Sigma = \Sigma_H$ finishing the proof of the theorem.
\end{proof}

Having Theorem \ref{thm-all-sing-sets1}, we can follow the arguments in \cite{EMT} to show the analogous statement for the mean curvature flow about the size of singular sets, stated in Corollary \ref{zero-vol}.

\begin{proof}[Proof of Corollary \ref{zero-vol}]
The proof is the same as for the Ricci flow in \cite{EMT} and for the convenience of the reader we sketch it below.

Define for $k\in \NN$
$$\Sigma_{H,k} := \{p\in M_0 | \abs{H}^2(F(p,t),t) \ge \frac{1/k}{T-t}, \,\,\, \forall t\in [T-1/k,T)\},$$
and $\Sigma_{H,0} = \emptyset$. 
Then by Theorem \ref{thm-all-sing-sets1}, we have 
\begin{equation*}
 \Sigma_{H,k}\subset \Sigma_H = \Sigma.
\end{equation*}
By the definition of the sets $\Sigma_{H,k}$, we have for all $t\geq T-\frac{1}{k}$ on $\Sigma_{H, k}$ 
$$\int_{T-1/k}^t H^2\, d s \ge \log(\frac{1/k}{T-t})^{1/k}$$
Recall that
$$\frac{d}{dt}\mu_t = -H^2\mu_t.$$
This implies, using the obvious fact that $\int^{t}_{T-\frac{1}{k}} H^{2} ds \leq \int_{0}^{t} H^{2} ds$ for all $t\geq 0$, that
\begin{eqnarray*}
\mu_t(\Sigma_{H,k}\backslash \Sigma_{H,k-1}) &=& \int_{\Sigma_{H,k}\backslash \Sigma_{H,k-1}} e^{-\int_0^t H^2\, d s}\, 
d\mu_0  \le \int_{\Sigma_{H,k}\backslash \Sigma_{H,k-1}} e^{-\int_{T-1/k}^t H^2\, d s}\, d\mu_0\\
&\leq & k^{1/k} (T-t)^{1/k} \mu_0(\Sigma_{H,k}\backslash \Sigma_{H,k-1}) \\ &\leq& 2 (T-t)^{1/k} \mu_0(\Sigma_{H,k}\backslash \Sigma_{H,k-1}).
\end{eqnarray*}
Here, we have used the inequality $k^{\frac{1}{k}}\leq 2$ for all $k\in \NN$. Therefore,
\begin{eqnarray*}
\mu_t(\Sigma) &=& \sum_{k=1}^{\infty}\mu_t(\Sigma_{H,k}\backslash \Sigma_{H,k-1})  \\
&\le& 2\sum_{k=1}^{\infty}(T-t)^{1/k}\mu_0(\Sigma_{H,k}\backslash \Sigma_{H,k-1}).
\end{eqnarray*}
Since $\sum_{k=1}^{\infty}\mu_0(\Sigma_{H,k}\backslash \Sigma_{H,k-1}) = \mu_0(\Sigma_H)  
\le \mu_0(M_0) < \infty$, we have
$$\lim_{t\to T} \mu_t(\Sigma) \le 2\lim_{t\to T} \sum_{k=1}^{\infty} (T-t)^{1/k}\mu_0(\Sigma_{H,k}\backslash \Sigma_{H,k-1}) = 0.$$
\end{proof}

\section{A Gap theorem for self-shrinkers}
\label{gapthm}
In this section, we prove Theorem \ref{gap-boundA}.

\begin{proof}[Proof of Theorem \ref{gap-boundA}]
Our proof follows Colding-Minicozzi \cite{CM} who obtained the following identity (see (9.42) there) for any self-shrinker $\Sigma^{'}$ 
without boundary and with polynomial volume growth, satisfying $H= 
\frac{1}{2}<x,\nu>$ and certain integrability conditions:
\begin{equation*}
 \int_{\Sigma^{'}}  |\nabla H|^2  \, e^{- \frac{|x|^2}{4} } d\mu_{\Sigma^{'}}  
	=  -\int_{\Sigma^{'}} H^2 \left(  \frac{1}{2} - |A|^2  \right)   \, e^{- \frac{|x|^2}{4} } d\mu_{\Sigma^{'}}. 
\end{equation*}
Changing the normalization to our self-shrinker $\Sigma$, we obtain
\begin{equation}
 \int_{\Sigma}  |\nabla H|^2  \, e^{- \frac{|x|^2}{2} }d\mu_{\Sigma}     
	=  -\int_{\Sigma} H^2 \left(  1 - |A|^2  \right)   \, e^{- \frac{|x|^2}{2} }d\mu_{\Sigma}   
\label{mainiden}
\end{equation}
and thus
 \begin{equation*}
 \int_{\Sigma}  |\nabla H|^2  \, e^{- \frac{|x|^2}{2} }d\mu_{\Sigma}    + \int_{\Sigma} H^2 \left(  1 - |A|^2  \right)   \, e^{- \frac{|x|^2}{2} }
d\mu_{\Sigma}   =0.
\end{equation*}
Using $\abs{A}^{2}< 1$, we deduce that $H\equiv 0$ and thus $\Sigma$ must be a hyperplane.\\
\h For reader's convenience, we will briefly indicate how all integrability conditions are satisfied and how to obtain (\ref{mainiden}). 
We will omit $d\mu_{\Sigma}$ in integrals. Let us define
the linear operator 
\begin{equation*}
 \mathcal{L} v = \Delta_{\Sigma} v- \langle x, \nabla_{\Sigma} v\rangle 
\equiv e^{\frac{\abs{x}^2}{2}} \text{div}_{\Sigma} (e^{-\frac{\abs{x}^2}{2}}\nabla_{\Sigma} v).
\end{equation*}
Then, on $\Sigma$, we have
\begin{equation}
 \mathcal{L} H +\abs{A}^{2} H =H.
\label{iden2}
\end{equation}
(see \cite[Theorem 5.1]{Huisken93} and also \cite[Lemma 5.5]{CM}.)\\
Furthermore, the operator $\mathcal{L}$ is self-adjoint in a weighted $L^{2}$ space with weight $e^{-\frac{\abs{x}^2}{2}}$. This means that if $u, v$ are
$C^{2}$ functions with
\begin{equation}
\int_{\Sigma} \left ( \abs{u\nabla v} + \abs{\nabla u}\abs{\nabla v} + \abs{u \mathcal{L} v}\right) e^{-\frac{\abs{x}^2}{2}} <\infty 
\label{iden3}
\end{equation}
then we get (see \cite[Corollary 3.10]{CM})
\begin{equation}
\int_{\Sigma} u (\mathcal{L} v) e^{-\frac{\abs{x}^2}{2}} = -\int_{\Sigma} \langle \nabla u,\nabla v\rangle e^{-\frac{\abs{x}^2}{2}}. 
\label{iden4}
\end{equation}
Now, if we differentiate the equation $H =\langle x, \nu \rangle$ in an orthonormal frame $e_{1}, \cdots, e_{n}$ on $\Sigma$ as in \cite[Theorem 4.1]{Huisken90}
and obtain $\nabla_{i} H = \langle x, e_{l}\rangle h_{li}$. By our assumption $\abs{A}^2<1$, we obtain
\begin{equation*}
 \abs{H}^2<n~~\text{and}~~ \abs{\nabla H}^2\leq \abs{A}^2 \abs{x}^2 \leq \abs{x}^2.
\end{equation*}
Combining the above inequalities with (\ref{iden2}) and the fact that $\Sigma$ has polynomial volume growth, we find that $\abs{H}^2, \abs{\nabla H}^2$
and $H \mathcal{L} H$ are in the weighted $L^{1}$ space with weight $e^{-\frac{\abs{x}^2}{2}}$, i.e., 
\begin{equation*}
 \int_{\Sigma} \left ( \abs{H\nabla H} + \abs{\nabla H}^2 + \abs{H\mathcal{L} H}\right) e^{-\frac{\abs{x}^2}{2}}<\infty.
\end{equation*}
Therefore, we can apply (\ref{iden4}) to  $u = v = H$ to get
\begin{equation*}
 \int_{\Sigma} \abs{\nabla H}^2 e^{-\frac{\abs{x}^2}{2}}
= -\int_{\Sigma} H (\mathcal{L}H) e^{-\frac{\abs{x}^2}{2}} = -\int_{\Sigma} H^{2} (1-\abs{A}^2)e^{-\frac{\abs{x}^2}{2}}.
\end{equation*}
This gives the desired identity (\ref{mainiden}).
\end{proof}

By Lemma 2.9 in \cite{CM}, if $(M_t)$ is the closed mean curvature flow with the initial hypersurface $M_0$ and if $\tau > 0$ is given, there exists a constant $V = V(M_0,\tau)$ so that
$$\vol(B_r(x_0)\cap M_t) \le Vr^n,$$
for all $t \ge \tau$ and all $x_0\in \mathbb{R}^{n+1}$. As a consequence of this volume bound, any self-shrinker 
that arises as a blow up limit of a closed mean curvature flow has a polynomial volume growth. An immediate corollary of this consideration 
and Theorem \ref{gap-boundA} is the following result:
\begin{corollary}
If $(M_t)$ is a self-shrinker that is a blown up limit of the closed mean curvature flow, such that there is a $t_0$, with 
$\sup_{M_{t_0}}|A|(\cdot,t_0) < 1$, then $M_t$ must be a hyperplane.
\end{corollary}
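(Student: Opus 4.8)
The plan is to obtain a weighted integral identity relating $\int_{\Sigma}\abs{\nabla H}^2$ to $\int_{\Sigma}H^2(1-\abs{A}^2)$ (both against a Gaussian weight) and then read off $H\equiv 0$ from the fact that, under the hypothesis $\abs{A}^2<1$, the two sides have opposite signs. Throughout I use the normalization $H=\langle x,\nu\rangle$ for a self-shrinker and the drift operator $\mathcal{L}v:=\Delta_{\Sigma}v-\langle x,\nabla_{\Sigma}v\rangle = e^{\abs{x}^2/2}\,\mathrm{div}_{\Sigma}\bigl(e^{-\abs{x}^2/2}\nabla_{\Sigma}v\bigr)$, which is formally self-adjoint in $L^2(e^{-\abs{x}^2/2}\,d\mu_{\Sigma})$.

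First I would record Huisken's identity for self-shrinkers, namely $\mathcal{L}H+\abs{A}^2H=H$ (Theorem 5.1 in Huisken, or Lemma 5.5 in Colding--Minicozzi). Next, to legitimately integrate by parts against the Gaussian weight with no boundary contribution, I would verify the required integrability: differentiating $H=\langle x,\nu\rangle$ in an orthonormal frame $e_1,\dots,e_n$ gives $\nabla_iH=\langle x,e_l\rangle h_{li}$, so pointwise $\abs{\nabla H}^2\le\abs{A}^2\abs{x}^2\le\abs{x}^2$ and $\abs{H}^2<n$; combining these with the equation for $\mathcal{L}H$ and the polynomial volume growth of $\Sigma$ shows that $\abs{H\nabla H}$, $\abs{\nabla H}^2$ and $\abs{H\mathcal{L}H}$ all lie in $L^1(e^{-\abs{x}^2/2}\,d\mu_{\Sigma})$. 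Then applying the weighted integration-by-parts formula (Corollary 3.10 in Colding--Minicozzi) with $u=v=H$ yields
\begin{equation*}
 \int_{\Sigma}\abs{\nabla H}^2\,e^{-\abs{x}^2/2}
 = -\int_{\Sigma}H(\mathcal{L}H)\,e^{-\abs{x}^2/2}
 = -\int_{\Sigma}H^2\bigl(1-\abs{A}^2\bigr)\,e^{-\abs{x}^2/2}.
\end{equation*}
The left-hand side is nonnegative while, since $\abs{A}^2<1$, the right-hand side is nonpositive; hence both vanish, forcing $H\equiv 0$ on $\Sigma$. A smooth complete self-shrinker with $H\equiv 0$ satisfies $\langle x,\nu\rangle\equiv 0$, so it is a minimal cone, and smoothness then forces it to be a hyperplane (Corollary 2.8 in Colding--Minicozzi).

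I expect the only genuine difficulty to be the boundary-term-free integration by parts — i.e., the verification of the integrability condition, rather than the formal manipulation. The key point is that the a priori pointwise bound $\abs{\nabla H}\le\abs{x}$, which is special to self-shrinkers and uses $\abs{A}^2\le 1$, is exactly what allows the weight $e^{-\abs{x}^2/2}$ to dominate the polynomial volume growth; without such an estimate one would instead have to truncate with cutoffs $\varphi_R$ supported in $B_R$ and show that the error terms vanish as $R\to\infty$, which is considerably more delicate.
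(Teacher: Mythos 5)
Your core computation is correct, and it is in fact the paper's own proof of its gap theorem (Theorem \ref{gap-boundA}) reproduced: Huisken's equation $\mathcal{L}H+\abs{A}^2H=H$, the pointwise bounds $\abs{\nabla H}\le\abs{x}$ and $H^2<n$ to check the weighted integrability, self-adjointness of $\mathcal{L}$ in $L^2(e^{-\abs{x}^2/2}d\mu)$ (Corollary 3.10 of \cite{CM}), and the sign argument forcing $H\equiv 0$, hence a hyperplane. The paper, however, proves the corollary simply by citing Theorem \ref{gap-boundA}, and the \emph{only} content the corollary adds to that theorem is the removal of the polynomial volume growth hypothesis: it must be shown that a self-shrinker arising as a blow-up limit of a closed mean curvature flow automatically has polynomial volume growth. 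This is exactly the point you pass over — you invoke ``the polynomial volume growth of $\Sigma$'' in your integrability check, but nothing in the corollary's hypotheses hands it to you, and without it the weighted integrals in your integration by parts are not known to be finite, so the identity you rely on is not justified. The paper supplies this step via Lemma 2.9 of \cite{CM}: for a closed flow and any $\tau>0$ there is $V=V(M_0,\tau)$ with $\vol(B_r(x_0)\cap M_t)\le Vr^n$ for all $t\ge\tau$, $x_0\in\mathbb{R}^{n+1}$, $r>0$; this scale-invariant area bound persists under the parabolic rescalings and passes to the limit, so the limiting self-shrinker has Euclidean (in particular polynomial) volume growth, and Theorem \ref{gap-boundA} then applies verbatim.

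A secondary caution: the hypothesis $\abs{A}^2<1$ is not scale-invariant, so one should state explicitly that the slice $M_{t_0}$ is to be rescaled to the normalization $H=\langle x,\nu\rangle$ (the $t=-\tfrac12$ slice) used in the gap theorem before applying it; you implicitly work in that normalization, which is the intended reading, but the passage from ``some time slice $M_{t_0}$ with $\sup\abs{A}<1$'' to that normalized self-shrinker deserves a sentence (the cylinders show the bound $\abs{A}^2=1$ is attained precisely in this normalization).
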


{} 

\end{document}